\newcommand{\sltwo}{\mathfrak{sl}(2)}
\newcommand{\mone}{\underline{1}}
\newcommand{\mtwo}{\underline{2}}
\newcommand{\mthree}{\underline{3}}
\newcommand{\Tbcd}{T(b,c,d)}
\newcommand{\Tbgd}{T(\beta,\gamma,\delta)}
\newcommand{\Ttil}{\widetilde{T}(b,\frac{1}{b},0)}
\newcommand{\xpm}{X_+ X_-}
\newcommand {\fsl}    {{\mathfrak{sl}}}
\newcommand{\mat}[1]{\text{\small\arraycolsep=4pt $\begin{pmatrix}#1\end{pmatrix}$}}
\newcommand\defis{\nobreak\hskip0pt\hbox{-}\nobreak\hskip0pt}
\newcommand\Tire{\ifdim\lastskip>0pt\unskip\nobreak\hskip1.75pt plus.75pt minus.5pt\hbox{---}\hskip1.75pt plus.75pt minus.5pt\else \hbox{---}\hskip2pt plus1pt
minus.5pt\fi}
\newtheorem{thm}{Theorem}
\newtheorem{lem}[thm]{Lemma}
\theoremstyle{definition}
\theoremstyle{definition}
\theoremstyle{remark}
\newtheorem{rmk}[thm]{Remark}
\def\c+{\rlap{\ \raisebox{.2ex}{\textnormal{\scriptsize+}}}\supset}
\begin{document}

\begin{abstract}
We completely describe the decompositions (into indecomposable
submodules) of the tensor products of irreducible $\sltwo$-modules
in characteristic 3. The answer resembles analogous decompositions
for the Lie superalgebra $\fsl(1|1)$.
\end{abstract}

\title[Tensor products of irreducible
  $\mathfrak{sl}(2)$-modules]{Decompositions of the tensor products of
  irreducible $\sltwo$-modules in characteristic 3}
\author{Brian Clarke}
\date{September 22, 2008}
\thanks{The author would like to thank Dimitry Leites for raising this
  problem and for helpful guidance along the way, Pavel Grozman for
  assistance with his excellent \texttt{SuperLie} package, as well as
  the MPIMiS and IMPRS for financial support and providing a creative atmosphere.}
\address{International Max Planck Research School, Max Planck
  Institute for Mathematics in the Sciences, Inselstra\ss e 22, 04103
  Leipzig, Germany; clarke@mis.mpg.de}
\maketitle

\section{Introduction}

Texts devoted to representations of Lie algebras in characteristic
$p>0$ are often prefaced by the disclaimer that the spaces
considered are of dimension less than $p$. To the best of the
author's knowledge, this restriction is always imposed on tensor
products of irreducible modules when studying the analog of
Klebsch-Gordon decompositions. That is, if $V$ and $W$ are two
irreducible modules over a simple Lie algebra $\mathfrak{g}$ and
one wishes to decompose $V \otimes W$ into indecomposable
submodules, $\dim V \otimes W$ is always restricted to be less
than $p$. In this paper, I remove this restriction and present a
complete investigation of the decomposition of $V \otimes W$ for
the case $\mathfrak{g} = \sltwo$ and $p=3$ for any irreducible
$\sltwo$-modules $V$ and $W$.

In $\sltwo$, we consider the natural basis
\[
 X_-=\mat{ 0&0\\1&0 },\quad
 H=\mat{ 1&0\\0&-1 },\quad
 X_+=\mat{ 0&1\\0&0 }.
\]
With this, the structure constants are derived from the relations
\[
[X_+,X_-]=H,\quad [H,X_{\pm}]=\pm2X_{\pm}.
\]

Let $k$ be an algebraically closed field of characteristic 3.
Irreducible $\sltwo$-modules in characteristic $p>2$ were
completely described by Rudakov and Shafarevich in
\cite{rudakov-shafarevich}. These modules are all of dimension $D
\leq p$, and in the cases $D<p$, there is no difference from the
case of characteristic zero (cf.~\cite{fulton-harris}). For $p=3$ and $D<3$,
these are only the modules denoted by $\mone$,
\begin{equation}
\xymatrix{
0 & V_0 \ar[l]^-{X_+} \ar[r]^-{X_-} & 0,
}
\end{equation}
and $\mtwo$,
\begin{equation}
\xymatrix{
0 & V_1 \ar[l]^-{X_+} \ar@<.5ex>[r]^-{X_-} & V_{-1} \ar@<.5ex>[l]^-{X_+}
\ar[r]^-{X_-}  & 0.
}
\end{equation}
In these diagrams, $V_{\rho}$ denotes the 1\defis dimensional
\emph{weight space} of eigenvectors of $H$ with eigenvalue $\rho$.
The arrows indicate the action of the operators  in the sub- or
superscript.

\begin{rmk}
More generally, over a field of prime characteristic $p$, we always
have the irreducible $\sltwo$ modules $\underline{N}$ for $N \in
\{1,\dots,p\}$, given diagrammatically by
\begin{equation}\label{eq:N}
\xymatrix{
0 & V_{N-1} \ar[l]^-{X_+} \ar@<.5ex>[r]^-{X_-} & V_{N-3} \ar@<.5ex>[l]^-{X_+}
\ar@<.5ex>[r]^-{X_-} & \cdots \ar@<.5ex>[l]^-{X_+}
\ar@<.5ex>[r]^-{X_-} & V_{-N+3} \ar@<.5ex>[l]^-{X_+}
\ar@<.5ex>[r]^-{X_-} & V_{-N+1} \ar@<.5ex>[l]^-{X_+} \ar[r]^-{X_-} & 0.
}
\end{equation}
\end{rmk}

Let us return to our case of characteristic $p=3$. For $D=3$,
i.e., 3\defis dimensional irreducible representations, we have more than
in the case of characteristic 0, where there is only the module
$\mthree$,
\begin{equation}
\xymatrix{
0 & V_{-1} \ar[l]^-{X_+} \ar@<.5ex>[r]^-{X_-} & V_0 \ar@<.5ex>[l]^-{X_+}
\ar@<.5ex>[r]^-{X_-} & V_1 \ar@<.5ex>[l]^-{X_+} \ar[r]^-{X_-} & 0.
}
\end{equation}
There is in fact an entire family of irreducible representations,
parametrized by a 3\defis dimensional variety, of which $\mthree$ is a
special case. Writing the images of the generators of $\sltwo$ as
matrices acting on a 3\defis dimensional vector space, these
representations are given as follows. First, we have the irreducible
modules that we denote by $T(b,c,d)$:
\begin{equation}\label{eq:T}
X_- = \mat{ 0 & 0 & c \\ 1 & 0 & 0 \\ 0 & 1 & 0 } \quad H = \mat{
d-1 & 0 & 0 \\ 0 & d & 0 \\ 0 & 0 & d+1 } \quad X_+ = \mat{ 0 &
a_1 & 0 \\ 0 & 0 & a_2 \\ b & 0 & 0 },
\end{equation}
where
\begin{equation}\label{a1a2}
\begin{aligned}
a_1 &= bc+d-1, \\
a_2 &= a_1+d = bc-d-1.
\end{aligned}
\end{equation}
We also have the family of ``opposite'' irreducible modules, where the
forms of $X_+$ and $X_-$ are exchanged, which we denote by
$\widetilde{T}(b,c,d)$:
\begin{equation}\label{eq:Ttilde}
X_- = \mat{ 0 & 0 & b \\ a_1 & 0 & 0 \\ 0 & a_2 & 0 } \quad H =
\mat{ d-1 & 0 & 0 \\ 0 & d & 0 \\ 0 & 0 & d+1 } \quad X_+ = \mat{
0 & 1 & 0 \\ 0 & 0 & 1 \\ c & 0 & 0 }.
\end{equation}
In both of these cases, $b$, $c$, and $d$ are arbitrary elements of
the ground field $k$, \emph{however} we don't allow the cases
\begin{equation}\label{eq:1}
T(0,0,1)\ \text{ or } \ T(0,0,-1),
\end{equation}
since in these cases the representation is not irreducible. Once
the other parameters are chosen, $a_1$ and $a_2$ are necessarily
given in terms of $b$, $c$, and $d$ by \eqref{a1a2} if the
matrices \eqref{eq:T} and \eqref{eq:Ttilde} are to be
representations of $\sltwo$. (To see this, one can explicitly
solve, for example, the equation $[X_+,X_-]=H$ for $a_1$ and $a_2$
using the above matrix representations, then check that the
relations $[H,X_{\pm}] = \pm 2 X_{\pm}$ are satisfied).

In addition to these two kinds of irreducible modules, we have each of
their duals, which we will denote by $T^* (b,c,d)$ and $\widetilde{T}^*
(b,c,d)$.

\begin{rmk}
Note that $\mthree \simeq T(0,0,0)$ as $\sltwo$-modules.
\end{rmk}

All of these irreducible modules can be glued into the following
indecomposables.

We let $M \c+ \widetilde{M}$ denote the \emph{semidirect sum} of the
subspaces $M$ and $\widetilde{M}$. By this, we mean that $\widetilde{M}$ is a
submodule.

A diagram of subspaces (e.g. $M \rightarrow \widetilde{M}$) indicates
something similar, but gives more information. A subspace that is the
source of no arrows (in our example, $\widetilde{M}$) is a submodule. A
subspace that is the source of some arrows becomes a submodule upon
taking the quotient modulo the targets of those arrows (in our
example, $M/\widetilde{M}$). Note that a subspace that is the source of
some arrows cannot be selected uniquely, since it is a quotient
space. Instead, we should think of it as the span of a collection of
vectors, the representatives of which form a basis for the quotient
space.

The direction of an arrow in a diagram also carries information. An
arrow pointing to the right indicates that we get an element of the
target by acting on an element of the source with $X_-$. An arrow
pointing to the left indicates the same for $X_+$. Note that the
``direction'' of the arrow refers only to the left/right
direction. That is, an arrow that points up/down and right is still
thought of as an ``arrow pointing to the right'', and similarly for up/down and
left. We also say, taking the example from the previous paragraph, that
``$M$ is glued in to $\widetilde{M}$ via $X_-$''.

For later, we let $M_1$ denote the submodule (cf.~\cite{leites-indecomposable}):
\begin{equation*}
\raisebox{-2.5em}{$M_1\colon$\quad}\xymatrix@dr{
 \mtwo \ar[d] \ar[r] & \mone\ . \ar[d] \\
 \mone \ar[r] & \mtwo
}
\end{equation*}
To make the statements of the last two paragraphs concrete, we dissect
this particular case. The symbol $\mtwo$ at the bottom indicates an
irreducible submodule. The symbol $\mone$ on the left is represented
by the span of a single vector $v$ with $X_+ v = 0$ and $X_- v$ a
vector of the irreducible submodule $\mtwo$ at the bottom. Similarly, the
symbol $\mone$ on the left is represented by the span of a single
vector $w$ with $X_- w = 0$ and $X_+ w$ a vector of the irreducible
submodule $\mtwo$ at  the bottom. Finally, the symbol $\mtwo$ at the top stands
for the span of two vectors, $v'$ and $w'$, with
\begin{equation}
\begin{aligned}
X_- v' &= w',  &  X_- w' &= \mu w, \\
X_+ v' &= \lambda v,  &  X_+ w' &= v',
\end{aligned}
\end{equation}
for some $\lambda, \mu \in k$, i.e. $X_+ v'$ is contained in the left
``$\mone$'' and $X_- w'$ is contained in the right ``$\mone$''.

The main result of the paper is the following theorem.

\begin{thm}\label{thm:main}
The decompositions (into indecomposable submodules) of tensor products
of all irreducible $\sltwo$-modules ($\mone$, $\mtwo$, $T(b,c,d)$,
$\widetilde{T}(b,c,d)$, and their duals) are completely described by
\begin{enumerate}
\item $\mone \otimes V \simeq V$ for any $\sltwo$-module $V$,
\item $\mtwo \otimes \mtwo = \mone \oplus \mthree$,
\item $\mtwo \otimes \widetilde{T}(b,\frac{1}{b},0) =
  \widetilde{T}(b,\frac{1}{b},0) \oplus T(\frac{1}{b},0,0)$,
\end{enumerate}
together with Tables \ref{table:TwoTimesbcd}--\ref{table:bcdTimesbcd},
found at the end of the paper.
\end{thm}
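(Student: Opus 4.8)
The plan is to establish the three explicit isomorphisms (1)--(3) by direct computation and then to fill in Tables \ref{table:TwoTimesbcd}--\ref{table:bcdTimesbcd} by a uniform case analysis of the tensor products involving the three\defis dimensional modules. Part (1) is immediate: since $\mone$ is the trivial module, the tensor product action $x \mapsto x \otimes 1 + 1 \otimes x$ restricts on $\mone \otimes V$ to the action on $V$, giving $\mone \otimes V \simeq V$ at once. The remaining cases all rest on the same machinery, so the real content is organizing that machinery.

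For parts (2) and (3), and for every entry of the tables, the method is the following. Given two irreducibles with explicit matrices for $X_+$, $H$, $X_-$ as in \eqref{eq:T} and \eqref{eq:Ttilde}, I would form the tensor product representation, on which the generators act by $X_\pm \otimes 1 + 1 \otimes X_\pm$ and $H \otimes 1 + 1 \otimes H$. On the resulting space (of dimension up to $9$) I would first compute the decomposition into $H$\defis eigenspaces. Because we work in characteristic $3$, the eigenvalues lie in the prime field and several classical weights collapse, so the weight multiplicities alone already restrict the possible indecomposable summands. The core of each case is then to locate the highest\defis and lowest\defis weight vectors, i.e.\ to compute $\ker X_+$ and $\ker X_-$, and to determine the submodule each of them generates under repeated application of the opposite operator. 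Matching these submodules against the list of known irreducibles and against the gluing diagram for $M_1$ lets me read off both the indecomposable constituents and the arrows (the $X_\pm$\defis linkings) in the notation fixed earlier.

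To keep the number of cases manageable, I would exploit the symmetries $V \otimes W \simeq W \otimes V$ and $(V \otimes W)^* \simeq W^* \otimes V^*$. These reduce the duals and the reversed orderings to a small set of base computations, so that, for instance, a single analysis of $\Tbcd \otimes \Tbgd$ yields the corresponding entries for the tilded and starred variants as well.

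The main obstacle is the non\defis semisimplicity that is intrinsic to characteristic $p$: a generic $\Tbcd \otimes \Tbgd$ need not split into irreducibles, and one must instead identify its indecomposable pieces together with the precise gluing pattern\Tire distinguishing, for example, a genuine $M_1$ from an accidental direct sum by checking whether the relevant $X_\pm$\defis images actually vanish. This difficulty is compounded by the dependence on the six parameters $b,c,d,\beta,\gamma,\delta$: certain polynomial relations among them (such as the locus $c = \tfrac{1}{b}$, $d=0$ singled out in part (3)) force degenerate behavior, so the decomposition type jumps as one crosses a stratification of parameter space. Isolating these special loci\Tire and verifying that the matrices computed by hand, or with the \texttt{SuperLie} package, genuinely assemble into the claimed diagrams on each stratum\Tire is where the bulk of the labor lies, and is precisely what the tables are designed to record.
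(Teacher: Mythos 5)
Your overall strategy---explicit matrices for the tensor product, decomposition into $H$-eigenspaces, and a stratified case analysis over the parameters, assisted by \texttt{SuperLie}---matches the paper's, and your treatment of part (1) and of the symmetry $V \otimes W \simeq W \otimes V$ is fine. But there is a genuine gap at the heart of your method: you propose to find the indecomposable constituents by computing $\ker X_+$ and $\ker X_-$ and generating submodules from highest and lowest weight vectors. By Lemma \ref{lemma:cubed}, $X_+^3$ and $X_-^3$ act as \emph{scalars} on all of $S \otimes T$, and these scalars are generically nonzero; for instance on $\Tbcd \otimes \Tbgd$ one has $X_-^3 = (c+\gamma)\,\mathrm{id}$. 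So on the generic stratum both kernels vanish, there are no highest or lowest weight vectors at all, and your procedure produces no submodules to start from. This is precisely the situation of Sections \ref{sec:bc0bcdNothing} and \ref{sec:bcdbcdNothing}, which the paper handles by a different device: the operator $\xpm$ preserves each weight space $V_\rho$, each of its eigenvectors generates a $3$-dimensional irreducible submodule (Lemma \ref{lemma:decomp}), and whether one obtains a direct sum of three irreducibles or a semidirect sum $M \c+ (M' \oplus M'')$ is decided by whether the \emph{minimal} polynomial of $\xpm$ on $V_\rho$ splits into distinct linear factors. Nothing in your proposal supplies this criterion, and weight multiplicities alone cannot, since all three weight spaces are $3$-dimensional in every $9$-dimensional case.

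A second, smaller issue: to claim the tables cover \emph{all} irreducibles you must place the duals $T^*(b,c,d)$, $\widetilde{T}^*(b,c,d)$ and the modules $\widetilde{T}(b,c,d)$ back into the two families actually tabulated. The paper does this by explicit changes of basis (Lemmas \ref{lemma:T}--\ref{lemma:twozeros}), showing $T^*(b,c,d) \simeq T(-b,-c,-d)$ and that $\widetilde{T}(b,c,d)$ is isomorphic to some $T(b',c',d')$ unless two of $a_1, a_2, b$ vanish, in which case it is $\widetilde{T}(b,\frac{1}{b},0)$. Your identity $(V \otimes W)^* \simeq W^* \otimes V^*$ does not substitute for this: it relates the decomposition of $V^* \otimes W^*$ to the dual of a known decomposition (which would additionally require computing the duals of the glued indecomposables appearing there), and it says nothing about mixed products such as $T^*(b,c,d) \otimes T(\beta,\gamma,\delta)$.
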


\begin{rmk}
Statement (1) of the theorem is obvious.
\end{rmk}

\begin{rmk}
In \S \ref{sec:prelim}, we will explain how we arrived at the
list of modules examined in Tables
\ref{table:TwoTimesbcd}--\ref{table:bcdTimesbcd}, as well as why we are
allowed to seemingly ignore certain modules.
\end{rmk}

The paper is organized as follows. In \S \ref{sec:prelim} we will
closely examine the families of modules and reduce the number of
different cases we must consider separately by demonstrating certain
correspondences. Then, we will study the structure of the modules we
are concerned with and, in particular, semidirect sums. In \S
\ref{sec:not}, we will briefly present the notation we use for the
calculations. Finally, in \S \ref{sec:cases}, we break down the
various tensor products we can form case by case and compute their
decompositions.

All computations for this paper were made with the assistance of
\texttt{SuperLie} (\cite{superlie}, \cite{grozman-leites-superlie}).

\section{Preliminaries}\label{sec:prelim}

Before we get into the meat of the paper, let us carefully describe
the $\sltwo$-modules in characteristic 3, first the irreducible ones,
then certain indecomposables (to describe all indecomposables is an
open problem).

We begin by proving a couple of lemmas which will help us to reduce
the amount of work we have to do. In particular, we can show (via a
change of basis) that some seemingly different modules are actually
isomorphic. In the end, we will only have to consider the cases given in
the following lemma (in addition, of course, to $\mtwo$):

\begin{lem}\label{lemma:families}
We can represent every 3\defis dimensional irreducible
$\sltwo$-module by a member of one of the following two families:
\begin{enumerate}
\item $T(b,c,d)$, where $b,c,d \in k$ are arbitrary subject to \eqref{eq:1};
\item $\widetilde{T}(b,\frac{1}{b},0)$.
\end{enumerate}
\end{lem}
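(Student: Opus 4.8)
The plan is to exhibit explicit changes of basis (equivalently, module isomorphisms) that collapse the four displayed families — $T(b,c,d)$, $\widetilde{T}(b,c,d)$, and their duals $T^*(b,c,d)$, $\widetilde{T}^*(b,c,d)$ — down to the two normal forms claimed in the statement. Since every irreducible $3$-dimensional module is one of these four types (this is the Rudakov--Shafarevich classification recalled above), it suffices to produce isomorphisms of the three ``extra'' families into the family $T(b,c,d)$, together with a separate normalization of the $\widetilde{T}$ family into the one-parameter form $\widetilde{T}(b,\tfrac{1}{b},0)$. My first step would be to dualize: writing down the action on the dual module is just taking negative transposes of the matrices in \eqref{eq:T} and \eqref{eq:Ttilde}, and one checks directly that $T^*(b,c,d)$ is isomorphic (after a permutation of the weight-basis reversing the weights, i.e. conjugating by the antidiagonal) to some $\widetilde{T}(b',c',d')$, and symmetrically $\widetilde{T}^*(b,c,d)\simeq T(b',c',d')$. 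This reduces the four families to the two families $T$ and $\widetilde{T}$.

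Next I would reduce $\widetilde{T}(b,c,d)$ itself. The key observation is that the weight $d$ is not an invariant: because we are in characteristic $3$, the diagonal of $H$ in \eqref{eq:Ttilde} is $(d-1,d,d+1)$, and adding $d$ (mod $3$) cyclically permutes these three values, so conjugating by an appropriate cyclic permutation matrix of the weight basis shows $\widetilde{T}(b,c,d)\simeq \widetilde{T}(b',c',0)$ for suitable $b',c'$ obtained by tracking how the off-diagonal entries $b$, $a_1$, $a_2$, $c$ get relabeled. Once $d=0$, the constraint \eqref{a1a2} forces $a_1=bc-1=a_2$, and a further diagonal conjugation (rescaling the three weight vectors by a diagonal matrix $\mathrm{diag}(t_1,t_2,t_3)$, which preserves $H$ and only rescales the off-diagonal entries of $X_\pm$) lets me rescale the product $bc$ while adjusting $b$ and $c$ individually. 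The goal is to bring the pair $(b,c)$ into the normal form $(b,\tfrac{1}{b})$, i.e. to arrange $bc=1$; I would show this is possible exactly when $bc\neq 0$ fails to obstruct irreducibility, handling the degenerate $b=0$ or $c=0$ cases (which, when combined with $d=0$, land back in the $T$ family or among the excluded non-irreducible cases \eqref{eq:1}).

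The analogous reduction applies to $T(b,c,d)$: the same cyclic-permutation argument sets $d=0$, and a diagonal rescaling adjusts the product $bc$. Here, however, the claim is only that every irreducible module is isomorphic to \emph{some} member of the $T(b,c,d)$ family (with the single exclusion \eqref{eq:1}), not to a one-parameter subfamily — so for $T$ I do not need to pin down $bc$, and the reduction is correspondingly lighter. The crux of the whole lemma is therefore the bookkeeping in the $\widetilde{T}$ case: I must verify that the combined effect of (i) the cyclic weight permutation and (ii) the diagonal rescaling is enough to reach $(d,c)=(0,\tfrac1b)$, and that the resulting $b$ is still a genuine free parameter rather than being further constrained.

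I expect the main obstacle to be keeping the constraint \eqref{a1a2} consistent through each conjugation. A cyclic permutation of the weight basis does not simply relabel $(b,c,d)\mapsto(b',c',d')$ while leaving $a_1,a_2$ determined by the \emph{same} formula — rather, $a_1$ and $a_2$ themselves migrate into the roles of $b$ or $c$ in the permuted matrix, and I must recompute and confirm that \eqref{a1a2} still holds for the new parameters (with $d'=0$). Verifying this compatibility, and simultaneously confirming that no genuinely irreducible module is accidentally sent outside the allowed parameter range or collapsed onto a reducible one, is where the real care is needed; everything else is routine matrix conjugation.
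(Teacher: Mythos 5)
There is a genuine gap, and it sits exactly where you yourself located ``the crux'': the proposed reduction of $\widetilde{T}(b,c,d)$ to the normal form $\widetilde{T}(b',\frac{1}{b'},0)$ is impossible, because the quantities you plan to change by conjugation are isomorphism invariants. The spectrum of $H$ on $\widetilde{T}(b,c,d)$ is the set $\{d-1,d,d+1\}\subset k$; conjugating by a permutation matrix only permutes the diagonal entries, so this set is preserved, and it equals $\{-1,0,1\}$ only when $d$ lies in the prime field $\{0,\pm 1\}$. Since $k$ is algebraically closed, hence infinite, a generic $d\in k$ admits no such reduction: the cyclic permutation shifts $d$ by $0$ or $\pm 1$, never to $0$. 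Likewise $c$ cannot be rescaled to $\frac{1}{b}$: $X_+^3$ acts on $\widetilde{T}(b,c,d)$ as the scalar $c$ (and $X_-^3$ as $a_1a_2b$), so $c$ is an invariant, and no conjugation brings a module with $c=0$, or with $a_1a_2b\neq 0$, to the form $\widetilde{T}(b',\frac{1}{b'},0)$, for which $X_+^3=\frac{1}{b'}\neq 0$ and $X_-^3=0$. The lemma is true for the opposite reason from the one you pursue: the \emph{generic} $\widetilde{T}(b,c,d)$ is isomorphic to some $T(b',c',d')$ (the paper's Lemma \ref{lemma:Ttilde}, valid whenever at most one of $a_1,a_2,b$ vanishes), and only the degenerate members --- those with two of $a_1,a_2,b$ equal to zero, equivalently $\dim\ker X_-=2$, which no $T(b',c',d')$ can have --- survive as genuinely new modules; Lemma \ref{lemma:twozeros} shows these are exactly the $\widetilde{T}(b,\frac{1}{b},0)$. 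So the second family in the statement is a small exceptional locus, not the target of a normalization of all $\widetilde{T}$'s.

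A second, less damaging error: your dualization pairs the families the wrong way around. Conjugating $-X_\pm^t$ by the antidiagonal turns superdiagonal entries back into subdiagonal ones, so the unit entries of $X_-$ in $T(b,c,d)$ reappear as unit entries of $X_-$ in the dual; one obtains $T^*(b,c,d)\simeq T(-b,-c,-d)$ and $\widetilde{T}^*(b,c,d)\simeq\widetilde{T}(b',c',d')$ (Lemma \ref{lemma:T}), not the crossed pairing you assert. Indeed $T(0,c,1)$ with $c\neq 0$ has $\dim\ker X_+=2$, a property preserved under dualization and never attained by any $\widetilde{T}(b',c',d')$, so $T^*(0,c,1)$ cannot be a $\widetilde{T}$. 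This particular mistake happens not to derail the overall strategy, since either pairing collapses the four families to two; but the error in the $\widetilde{T}$ reduction does, and it cannot be repaired along the lines you propose.
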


The proof of Lemma \ref{lemma:families} is immediately implied by
Lemmas \ref{lemma:T}, \ref{lemma:Ttilde}, and \ref{lemma:twozeros} below.

\begin{lem}\label{lemma:T}
For the dual modules of $T(b,c,d)$ and $\widetilde{T}(b,c,d)$, given any
$b,c,d \in k$, we have:
\begin{enumerate}
\item $T^* (b,c,d) \simeq T(b',c',d')$ for some $b',c',d' \in k$,
\item $\widetilde{T}^* (b,c,d) \simeq \widetilde{T}(b',c',d')$ for some
  $b',c',d' \in k$.
\end{enumerate}
\end{lem}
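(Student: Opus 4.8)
The plan is to compute the dual representation explicitly and then recognize the resulting matrices as belonging to the same family, possibly after a change of basis. Recall that for a representation $\rho$ of $\sltwo$ on a module $V$, the dual representation $\rho^*$ on $V^*$ is given by $\rho^*(x) = -\rho(x)^T$ for $x \in \sltwo$. So first I would write down, for $T(b,c,d)$, the matrices $-X_-^T$, $-H^T$, and $-X_+^T$ using the explicit forms in \eqref{eq:T}. Since $H$ is diagonal, $-H^T = -H$ simply negates the weights, so the weight $d$ in the middle becomes $-d$; this already suggests $d' = -d$. The off-diagonal matrices $-X_-^T$ and $-X_+^T$ will be lower- and upper-triangular (in the cyclic sense) with entries coming from $c$, $1$, $a_1$, $a_2$, and $b$.

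Next I would put these transposed matrices into the standard shape of the family $T(b',c',d')$ from \eqref{eq:T}. The key observation is that transposition swaps the roles of the ``raising'' and ``lowering'' structure: the subdiagonal pattern of $X_-$ in $T$ becomes a superdiagonal pattern after transposing, so a priori $T^*$ looks like a $\widetilde{T}$-type object. To genuinely land back in the $T$-family (part (1)) rather than the $\widetilde{T}$-family, I would conjugate by a suitable permutation/diagonal matrix — essentially reversing the ordering of the weight basis (sending $V_{d-1} \leftrightarrow V_{d+1}$) and rescaling — so that the negated weights reappear in increasing order and the matrix entries normalize to the canonical constants ($1$'s on the appropriate diagonal). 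After this conjugation I can read off $b'$, $c'$, $d'$ in terms of $b$, $c$, $d$ and verify that the constraint \eqref{a1a2} is automatically satisfied for the primed parameters, which it must be since the conjugate of a representation is a representation. Part (2) for $\widetilde{T}$ is entirely parallel: because $\widetilde{T}$ is defined by exchanging the forms of $X_+$ and $X_-$, dualizing and applying the analogous basis reversal returns an object of $\widetilde{T}$-type.

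I expect the main obstacle to be bookkeeping rather than conceptual: getting the conjugating matrix exactly right so that both the diagonal of $H$ comes out in the prescribed order $(d'-1, d', d'+1)$ and the off-diagonal entries match the normalized pattern of \eqref{eq:T} (or \eqref{eq:Ttilde}) simultaneously. One must check that the single conjugation achieves both normalizations at once and does not introduce spurious scalars that violate the relation between $a_1', a_2'$ and $b', c', d'$. Since the lemma only asserts existence of \emph{some} $b', c', d'$, I do not need closed-form expressions, only that the conjugated matrices lie in the stated family; this considerably lightens the computation. A final point to confirm is that the excluded degenerate cases \eqref{eq:1} map among themselves under dualization, so that the correspondence stays within the irreducible locus.
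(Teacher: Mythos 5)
Your proposal matches the paper's proof: the paper likewise writes the dual action as $-X^t$ and conjugates by the anti-diagonal matrix $S=\mat{0&0&1\\0&-1&0\\1&0&0}$ (exactly the weight-reversing permutation with a sign rescaling you describe), obtaining $b'=-b$, $c'=-c$, $d'=-d$ for part (1) and handling $\widetilde{T}^*$ with the same $S$. Your plan is correct and essentially identical in route; the only thing left is the routine matrix bookkeeping you already flag.
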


\begin{proof}
1) Recall that, given a matrix representation $X$ of the action of
an element of a Lie algebra on a module, the action of $X$ on the
dual module is given by $-X^t$, i.e., the negative transpose of
the original matrix. So here, the action of $\sltwo$ on the dual
module $T^* (b,c,d)$ is as follows:
\begin{equation}
 X_- = \mat{
 0 & -1 & 0 \\
 0 & 0 & -1 \\
 -c & 0 & 0
 },
 \quad
 H = \mat{ -d+1 & 0 & 0 \\ 0 & -d & 0 \\ 0 & 0 & -d-1 },\quad
 X_+ = \mat{ 0 & 0 & -b \\ -a_1 & 0 & 0 \\ 0 & -a_2 & 0 }.
\end{equation}
We apply a similarity transformation given by the matrix
\begin{equation}\label{eq:simtrf}
S = \mat{ 0 & 0 & 1 \\ 0 & -1 & 0 \\ 1 & 0 & 0 }.
\end{equation}
We then rename the resulting matrices to $X_{\pm}' := S X_\pm S^{-1}$
and $H' := S H S^{-1}$:
\begin{equation}\label{eq:trfmat}
X'_- = \mat{ 0 & 0 & c' \\ 1 & 0 & 0 \\ 0 & 1 & 0 }, \quad H' =
\mat{ d'-1 & 0 & 0 \\ 0 & d' & 0 \\ 0 & 0 & d'+1 }, \quad X'_+ =
\mat{ 0 & a'_1 & 0 \\ 0 & 0 & a'_2 \\ b' & 0 & 0 }.
\end{equation}

Here one easily checks that $a'_1 = a_2$, $a'_2 = a_1$, $b' = -b$, $c'
= -c$, and $d' = -d$. We have transformed our original
representation $T^* (b,c,d)$ into one of the form $T(b',c',d')$.

2) The action on the dual module $\widetilde{T}^* (b,c,d)$ is
given by:
\begin{equation}
 X_- = \mat{ 0 & -a_1 & 0 \\ 0 & 0 & -a_2 \\ -b & 0 & 0 }, \quad
 H = \mat{ -d+1 & 0 & 0 \\ 0 & -d & 0 \\ 0 & 0 & -d-1 }, \quad
 X_+ = \mat{ 0 & 0 & -c \\ -1 & 0 & 0 \\ 0 & -1 & 0 }.
\end{equation}
Since this is in complete analogy with the previous case, we skip the
detailed calculation. (We do note that one can, in fact, even use the same
similarity tranformation as in \eqref{eq:simtrf} above.)
\end{proof}

\begin{lem}\label{lemma:Ttilde}
For an appropriate choice of $b',c',d' \in k$ we have
\begin{equation}
\widetilde{T}(b,c,d) \simeq T(b',c',d'),
\end{equation}
if and only if at
most one of $a_1, a_2,$ and $b$ are equal 0.
\end{lem}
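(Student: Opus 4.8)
The plan is to exploit the fact that in both families the Cartan element $H$ acts diagonally with the three \emph{distinct} eigenvalues $d-1,d,d+1$ (distinct because $2\neq 0$ in characteristic $3$), so any module isomorphism must carry each weight space to the weight space of the same weight. The first step is therefore to understand the possible matchings of weights. Since $-1=2$ in characteristic $3$, the weight set $\{d-1,d,d+1\}$ is exactly the coset $d+\{0,1,-1\}$ of the prime subfield, and it coincides with $\{d'-1,d',d'+1\}$ precisely when $d'-d$ lies in the prime subfield, i.e.\ $d'\in\{d,d+1,d+2\}$. Consequently an isomorphism $\widetilde T(b,c,d)\to T(b',c',d')$ is necessarily given, in the weight bases, by a monomial matrix: a diagonal matrix followed by one of three cyclic relabelings of the basis, one for each admissible value of $d'$.

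Next I would record the relevant conjugation invariant. Viewing $X_-$ as a ``weighted cyclic shift'' of the three one-dimensional weight spaces, for $\widetilde T(b,c,d)$ its three nonzero slots carry the coefficients $a_1,a_2,b$ (each occupying a distinct row and column of \eqref{eq:Ttilde}), so $\operatorname{rank}X_-$ equals the number of nonzero entries among $a_1,a_2,b$. For any $T(b',c',d')$, by contrast, the two subdiagonal entries of $X_-$ in \eqref{eq:T} equal $1$, so $\operatorname{rank}X_-\ge 2$ always. Since rank is invariant under conjugation, this immediately yields the ``only if'' direction: if $\widetilde T(b,c,d)\simeq T(b',c',d')$ then at least two of $a_1,a_2,b$ are nonzero, i.e.\ at most one of them vanishes.

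For the converse I would argue that, whenever at least two of $a_1,a_2,b$ are nonzero, some two cyclically adjacent slots of $X_-$ carry nonzero coefficients, and the appropriate choice of $d'\in\{d,d+1,d+2\}$ lets a diagonal rescaling normalize those two coefficients to $1$, putting $X_-$ into the form \eqref{eq:T}. Concretely, when $a_1,a_2\neq 0$ (which covers the all-nonzero case and the case $b=0$) one takes $d'=d$ and $S=\operatorname{diag}(1,a_1^{-1},(a_1a_2)^{-1})$, obtaining $c'=a_1a_2b$ and $b'=c/(a_1a_2)$; the cases $a_1=0$ and $a_2=0$ are handled identically after cyclically relabeling the basis, which corresponds to taking $d'=d+1$ and $d'=d+2$ respectively. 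In each case the same $S$ simultaneously conjugates $X_+$ of \eqref{eq:Ttilde}, and the point to verify is that the result has the shape \eqref{eq:T} with parameters automatically satisfying \eqref{a1a2}; this follows because the construction forces $b'c'=bc$ and preserves $d$ modulo the prime field, whence $a_1'=a_1$ and $a_2'=a_2$.

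The main obstacle is precisely this last consistency check together with the bookkeeping of the weight matching: one must make sure that normalizing $X_-$ does not spoil the rigid form of $X_+$, and that the three cyclic relabelings are correctly paired with the three values of $d'$. A final minor point to confirm is that the hypothesis ``at most one of $a_1,a_2,b$ vanishes'' never produces an excluded target from \eqref{eq:1}; indeed a target $T(0,0,\pm 1)$ would force two of $a_1,a_2,b$ to vanish, so it is automatically avoided.
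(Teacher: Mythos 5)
Your proposal is correct and follows essentially the same route as the paper: the ``only if'' direction via the rank (equivalently, the dimension of the kernel) of $X_-$, and the converse via explicit monomial similarity transformations split into the same three cases ($a_1,a_2\neq 0$; $a_1=0$; $a_2=0$), with matching parameter values $b'=c/(a_1a_2)$, $c'=a_1a_2b$, $d'=d$ in the first case and $d'=d\pm 1$ in the cyclically relabeled ones. One small caveat: your closing justification ``$b'c'=bc$, whence $a_1'=a_1$ and $a_2'=a_2$'' is valid only in the first case --- in the permuted cases the paper's computation gives $c'=0$ and $a_1',a_2'$ equal to a permutation of $\{a_2, bc\}$ or $\{a_1,bc\}$, so consistency with \eqref{a1a2} must be checked directly there (it does hold, using $a_1=0$ or $a_2=0$ and $-2=1$ in characteristic $3$).
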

\begin{proof}
First, we note that if two of $a_1, a_2,$ and $b$ are zero, then we
will clearly get no isomorphism, since for modules of the form
$T(b,c,d)$, the matrix of $X_-$ can have at most one eigenvector with
eigenvalue 0.

The proof of the converse statement is broken up into different cases.

1) $a_1, a_2 \neq 0$. We apply the similarity transformation with
the matrix
\begin{equation}
S = \mat{ \frac{1}{a_1} & 0 & 0 \\ 0 & 1 & 0 \\ 0 & 0 &
\frac{1}{a_1 a_2} }.
\end{equation}
This yields matrices $H'$, $X'_\pm$ of the same form as in
\eqref{eq:trfmat} above. In this case, using the same notation as
above, we have $a'_1 = a_1$, $a'_2 = a_2$, $b' = \frac{c}{a_1 a_2}$,
$c' = a_1 a_2 b$, and $d' = d$.

2) $a_1 = 0$, $a_2, b \neq 0$. We apply the similarity transformation with
the matrix
\begin{equation}
S = \mat{ 0 & 1 & 0 \\ 0 & 0 & \frac{1}{a_2} \\ \frac{1}{b a_2} &
0 & 0 }.
\end{equation}
We get matrices as in the form of \eqref{eq:trfmat}, where this time
$a'_1 = a_2$, $a'_2 = bc$, $b' = \frac{1}{b a_2}$, $c' = 0$, and $d' =
d+1$.

3) $a_2 = 0$, $a_1,b \neq 0$. We apply the similarity
transformation with the matrix
\begin{equation}
S = \mat{ 0 & 0 & 1 \\ \frac{1}{b} & 0 & 0 \\ 0 & \frac{1}{b a_1}
& 0 }.
\end{equation}
We again get matrices in the form of \eqref{eq:trfmat}, where
$a'_1 = bc$, $a'_2 = a_1$, $b' = \frac{1}{b
a_1}$, $c' = 0$, and $d' = d-1$.
\end{proof}

The following lemma will tell us more about $\widetilde{T}(b,c,d)$ when
two of $a_1$, $a_2$, and $b$ are zero.

\begin{lem}\label{lemma:twozeros}
If two of $a_1$, $a_2$, and $b$ are zero, then $\widetilde{T}(b,c,d)\simeq\widetilde{T}(b,\frac{1}{b},0)$.
\end{lem}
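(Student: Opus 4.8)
The plan is to exhaust the three ways in which two of $a_1$, $a_2$, $b$ can vanish, reading off in each case the constraints that \eqref{a1a2} imposes on $b$, $c$, $d$. First I would note that all three cannot vanish at once: if $a_1 = a_2 = 0$, then adding the defining relations $a_1 = bc+d-1$ and $a_2 = bc-d-1$ gives $2bc = 2$, hence $bc = 1$ (as $2$ is invertible in characteristic $3$) and in particular $b \neq 0$. So exactly two of the three are zero, leaving the cases $a_1 = a_2 = 0$, $\;a_1 = b = 0$, and $a_2 = b = 0$.

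In the case $a_1 = a_2 = 0$ the same two relations give $bc = 1$ together with $2d = 0$, i.e.\ $d = 0$; thus $b \neq 0$, $c = \frac{1}{b}$, $d = 0$, and $\widetilde{T}(b,c,d)$ is literally $\Ttil$, with no change of basis needed. This is the only case in which the statement holds verbatim with the original $b$.

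The remaining two cases both have $b = 0$, so the symbol $\frac{1}{b}$ in the conclusion is undefined and must be read as membership in the family $\Ttilg$. If $a_1 = b = 0$, then \eqref{a1a2} forces $d = 1$ and $a_2 = -2 = 1$, giving the module $\widetilde{T}(0,c,1)$; symmetrically $a_2 = b = 0$ yields $d = -1$, $a_1 = 1$, and the module $\widetilde{T}(0,c,-1)$. In both a direct check shows the module is irreducible exactly when $c \neq 0$, which is what makes the intended parameter $\beta = \frac{1}{c}$ well defined. I would then produce, exactly as in the proofs of Lemmas \ref{lemma:T} and \ref{lemma:Ttilde}, an explicit similarity transformation $S$ carrying the matrices \eqref{eq:Ttilde} into the shape of a $\Ttilg$ with $\beta = \frac{1}{c}$. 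The transformation is forced by matching weight spaces: in characteristic $3$ the weight multiset $\{d-1,d,d+1\}$ equals $\{-1,0,1\}$ both for $d = 1$ and for $d = -1$ (using $2 = -1$), and this is exactly the weight set of $\Ttilg$, so $S$ is a weight-respecting rescaling whose three scalars are pinned down by comparing the actions of $X_+$ and $X_-$ on the weight vectors.

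The main obstacle is this reinterpretation together with the explicit construction in the $b = 0$ cases: unlike the case $a_1 = a_2 = 0$, here the module is only isomorphic to, not equal to, a member of the family, and one has to recognize that the correct target is $\Ttilg$ with $\beta = \frac{1}{c}$ rather than anything involving the original $b = 0$. The coincidence of weight multisets that permits the isomorphism is a feature special to characteristic $3$; once it is in hand, determining the three rescaling scalars is a short linear computation of the same type as in Lemmas \ref{lemma:T} and \ref{lemma:Ttilde}.
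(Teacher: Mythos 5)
Your proposal is correct, and it diverges from the paper only in how the two cases with $b=0$ are finished. The case split, the derivation of the constraints from \eqref{a1a2} (giving $bc=1$, $d=0$ in the first case and $d=\pm1$, $a_2$ or $a_1$ equal to $1$ in the other two), and the exclusion of $\widetilde{T}(0,0,\pm1)$ on irreducibility grounds all match the paper exactly. For the $b=0$ cases, however, the paper does not build the isomorphism from scratch: it invokes the $\widetilde{T}$-version of Lemma \ref{lemma:dint} to shift $d=\pm1$ to $d=0$ (possible since $c\neq 0$), and then observes that a similarity transformation preserves the rank of $X_-$, so the transformed module still has two of $b'$, $a_1'$, $a_2'$ equal to zero; with $d'=0$ one has $a_1'=a_2'$, which forces $a_1'=a_2'=0$ and lands the module back in the first case. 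You instead construct the isomorphism directly, matching the weight multisets $\{d-1,d,d+1\}=\{-1,0,1\}$ and reading off the scaled permutation of weight vectors. Your route has the merit of identifying the target parameter explicitly as $\beta=\frac{1}{c}$ (which I have checked is correct, and which the paper's argument leaves implicit), at the cost of one more routine matrix computation of the kind appearing in Lemmas \ref{lemma:T} and \ref{lemma:Ttilde}; the paper's route reuses Lemma \ref{lemma:dint} and avoids any new computation. Your opening observation that all three of $a_1$, $a_2$, $b$ cannot vanish simultaneously is a nice touch not made explicit in the paper. The only thing left unwritten in your version is the matrix $S$ itself, but since you have pinned down the target module and the weight-matching mechanism that determines $S$, this is a genuinely routine gap.
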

\begin{proof}
There are three possibilities: $a_1, a_2 = 0$, $a_1, b = 0$, and $a_2,
b = 0$. Let us examine each one in turn.

1) $a_1, a_2 = 0$. In this case, we have
\begin{equation}
\begin{split}
bc+d-1 & = 0, \\
bc-d-1 & = 0.
\end{split}
\end{equation}
Subtracting the second equation from the first implies $d=0$, which we
can plug back into the first equation to get $bc-1=0$, or
$c=\frac{1}{b}$. Hence, the module must be of the form $\widetilde{T}(b,\frac{1}{b},0)$.

2) $a_1, b = 0$. Here we have $0 = a_1 = bc+d-1 = d-1$. Hence,
$d=1$. Furthermore, by Lemma \ref{lemma:dint} below, we can
transform $T(0,c,1)$ into a representation with $d=0$ provided
that $c \neq 0$. Since $X_-$ will still have two eigenvectors with
eigenvalue 0, the transformed representation will necessarily be
as in case 1). (When $a_2=0$ and $b =0$, $d$ is necessarily $-1$; see
case 3) below.) Therefore, the only truly new case is
$\widetilde{T}(0,0,1)$. However, $\widetilde{T}(0,0,1)$ is not
irreducible (see the Introduction) and, since we assumed our
module to be irreducible, is disallowed. So we have now completely
reduced to case 1).

3) $a_2, b = 0$. In this case, we get $0 = a_2 = bc-d-1 = -d-1$,
or $d=-1$. As above, we can reduce this to case 1) if and only if
$c \neq 0$. Therefore, the only new case is
$\widetilde{T}(0,0,-1)$. However, from the Introduction we
know that $\widetilde{T}(0,0,-1)$ is not irreducible, and as in
case 2) we have now completely reduced to case 1).
\end{proof}

We also want to examine the cases where $d =0$ or $\pm 1$ in more
detail, since they will turn out to be special once we start
tensoring. It turns out that all three of these cases correspond to a
module where $d=0$ unless $c=0$:

\begin{lem}\label{lemma:dint}
Let $d=\pm 1$ and $c \neq 0$. Then
\begin{equation}
T(b,c,d) \simeq T(b',c',0)
\end{equation}
for an appropriate choice of $b'$ and $c'$.

The statement remains true if we replace $T$ by
$\widetilde{T}$ everywhere above.
\end{lem}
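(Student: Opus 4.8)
The plan is to exploit a cyclic structure special to characteristic $3$. In \eqref{eq:T} the diagonal of $H$ is $(d-1,d,d+1)$, which runs through all three residues modulo $3$, and $X_-$ lowers weights by $2\equiv-1\pmod 3$; a direct check gives $X_-^3=c\,\mathrm{Id}$, so when $c\neq 0$ the operator $X_-$ is invertible and cyclically permutes the three weight lines. From this point of view the parameter $d$ merely records which weight is assigned to the middle basis vector, and that assignment is a choice we are free to shift modulo $3$. I would therefore cyclically relabel the standard basis $e_1,e_2,e_3$ of \eqref{eq:T} so that the weight-$0$ line lands in the middle slot, forcing $d'=0$, and then rescale to return $X_-$ to the normalized companion shape of \eqref{eq:T}.

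Concretely, the two signs $d=\pm1$ call for the two opposite cyclic permutations. For $T(b,c,1)$ the weights of $e_1,e_2,e_3$ are $0,1,-1$, so I would pass to $f_1=e_3$, $f_2=e_1$, $f_3=e_2$ (weights $-1,0,1$) and then rescale by $g_1=f_1$, $g_2=cf_2$, $g_3=cf_3$; one reads off $H'=\mathrm{diag}(-1,0,1)$, so $d'=0$, while $X_-$ regains its standard form with corner entry $c'=c$ and the corner of the transformed $X_+$ gives $b'=a_2/c=b+\tfrac1c$. For $T(b,c,-1)$ the inverse permutation $f_1=e_2$, $f_2=e_3$, $f_3=e_1$ together with $g_1=f_1$, $g_2=f_2$, $g_3=cf_3$ does the same job, again yielding $d'=0$, $c'=c$, and $b'=a_1/c=b+\tfrac1c$.

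The hypothesis $c\neq 0$ is used precisely at the rescaling step: the diagonal scaling factors are $1$ and $c$, so the scaling matrix is invertible exactly when $c\neq 0$. Once $d'$, $c'$, and $b'$ have been identified, I do not need to compute the remaining entries $a_1',a_2'$ of the transformed $X_+$ by hand: a similarity transformation sends representations to representations, and, as observed in the text after \eqref{eq:Ttilde}, the parameters $a_1,a_2$ of a module in $T$-form are forced by $(b,c,d)$. Hence $X_+$ automatically has the shape prescribed by \eqref{a1a2} with parameters $(b',c',0)$, and the transformed module is literally $T(b',c',0)$.

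For the statement with $\widetilde T$ in place of $T$ I would run the identical argument with the roles of $X_+$ and $X_-$ exchanged: in $\widetilde T(b,c,d)$ it is $X_+$ that carries the companion cycle, with $X_+^3=c\,\mathrm{Id}$ and corner entry $c$, so the same relabellings and $c$-rescalings apply and again require only $c\neq 0$. The hard part is entirely in the bookkeeping: one must keep the permutation and the rescaling compatible so that the output sits in the prescribed normal form rather than in a permuted or rescaled variant, and one must be sure to use the two opposite cyclic permutations for $d=+1$ and $d=-1$. No genuinely new idea beyond the cyclic invertibility of $X_-$ (resp.\ $X_+$) is needed.
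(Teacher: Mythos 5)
Your proposal is correct and is essentially the paper's own argument: for $d=1$ your composite change of basis $g_1=e_3$, $g_2=ce_1$, $g_3=ce_2$ is exactly the similarity matrix $S$ used in the paper, and it yields the same parameters $c'=c$, $d'=0$, $b'=a_2/c$. The paper only treats $d=1$ explicitly and declares the remaining cases analogous, whereas you spell out $d=-1$ and the $\widetilde{T}$ variant, but no new idea is involved.
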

\begin{proof}
We will prove the statement for $T(b,c,1)$ (i.e. for $d=1$). The other cases
are completely analogous.

In this case, our representation is given by the following matrices:
\begin{equation}
 X_- = \mat{ 0 & 0 & c \\ 1 & 0 & 0 \\ 0 & 1 & 0 }, \quad
 H = \mat{ 0 & 0 & 0 \\ 0 & 1 & 0 \\ 0 & 0 & -1 }, \quad
 X_+ = \mat{ 0 & a_1 & 0 \\ 0 & 0 & a_2 \\ b & 0 & 0 }.
\end{equation}

We apply the similarity transformation with matrix
\begin{equation}
S = \mat{ 0 & 0 & 1 \\ \frac{1}{c} & 0 & 0 \\ 0 & \frac{1}{c} & 0
},
\end{equation}
renaming the resulting matrices to $X_{\pm}' := S X_\pm S^{-1}$
and $H' := S H S^{-1}$ to get
\begin{equation}
X'_- = \mat{ 0 & 0 & c' \\ 1 & 0 & 0 \\ 0 & 1 & 0 } \quad H' =
\mat{ -1 & 0 & 0 \\ 0 & 0 & 0 \\ 0 & 0 & 1 } \quad X'_+ = \mat{ 0
& a'_1 & 0 \\ 0 & 0 & a'_2 \\ b' & 0 & 0 }.
\end{equation}
Explicitly, we have $a'_1 = bc$, $a'_2 = a_1$, $b' = \frac{a_2}{c}$,
and $c' = c$. So the change of basis gives us a representation of the
form $T(b',c',0)$.
\end{proof}

Now that we have proved Lemma \ref{lemma:families}, we move on to
studying semidirect sums and the structure of our
$\sltwo$-modules. Let $S$ and $T$ be two 3\defis dimensional
irreducible $\sltwo$-modules, and let us consider the tensor product
$V=S \otimes T$ as an $\sltwo$-module. This is a 9\defis dimensional
space, but we may divide it into three distinguished 3\defis dimensional
subspaces, the \emph{weight spaces}, i.e., the eigenspaces of $H$. (It
is simple to check that $H$ has three distinct eigenvalues for any
such $S$ and $T$.)

\begin{rmk} 
For the remainder of this section, all vectors are
assumed to be weight vectors, i.e., eigenvectors of $H$.
\end{rmk}

Thus, instead of considering the full 9\defis dimensional space $V$,
we restrict attention to one of the 3\defis dimensional
weight spaces. We denote them by
\begin{equation}
V_\rho = \left\{ v \in V \mid Hv = \rho v \right\}.
\end{equation}

The element $X_+ X_- \in U(\sltwo)$ acts on each weight space, since if
$Hv=\rho v$ for some $\rho \in k$, then
\begin{equation}
\begin{aligned}
H(X_+ X_- v) &= X_+ H X_- v + [H,X_+] X_- v \\
 &= X_+ X_- H v + X_+ [H,X_-] v + 2 X_+ X_- v \\
 &= \rho X_+ X_- v.
\end{aligned}
\end{equation}
Therefore, we may consider $X_+ X_-$ as a linear transformation of one
such space and look for its eigenvalues and eigenvectors. We relate
this to semidirect sums in the following lemma:

\begin{lem}\label{lemma:eigenvalues}
Let $X_+ X_-$ have two distinct eigenvalues, $\lambda_1$ and
$\lambda_2$, on the subspace $V_\rho$ for some eigenvalue $\rho$ of
$H$. Suppose that $V$ contains a semidirect sum $M \c+
\widetilde{M}$. Further, let $\widetilde{M} \cap V_\rho = \mathrm{span} (v_1,v_2)$,
where $X_+ X_- v_i = \lambda_i v_i$. Consider the action of $\sltwo$ on
the quotient space and a vector $m \in M/\widetilde{M}$ with $Hm = \rho
m$. Then $X_+ X_- m = \lambda_i m$ for some $i$.
\end{lem}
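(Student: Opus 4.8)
The plan is to reduce the statement to an elementary fact about a single linear operator with an invariant subspace: the eigenvalues on the whole space, counted with multiplicity, are precisely those on the subspace together with those on the quotient. Here the operator is $\xpm$ acting on the weight space $V_\rho$, the invariant subspace is $\widetilde{M}\cap V_\rho$, and the quotient will turn out to be one-dimensional and to contain $m$.

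First I would check that $\xpm$ really does preserve $\widetilde{M}\cap V_\rho$. Since $\widetilde{M}$ is a submodule it is stable under the whole $\sltwo$-action, hence under every element of $U(\sltwo)$, in particular under $\xpm$; and $\xpm$ preserves each weight space by the computation immediately preceding the lemma. Thus $\xpm$ restricts to an operator on $\widetilde{M}\cap V_\rho$ and descends to one on the quotient $V_\rho/(\widetilde{M}\cap V_\rho)$.

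Next I would pin down the dimensions and locate $m$. Because $H$ acts semisimply and $\widetilde{M}$ is a submodule, $\widetilde{M}$ is graded by weight, so $\widetilde{M}\cap V_\rho=\mathrm{span}(v_1,v_2)$ is two-dimensional while $V_\rho$ is three-dimensional; hence $V_\rho/(\widetilde{M}\cap V_\rho)$ is one-dimensional. The weight-$\rho$ part of $M/\widetilde{M}$ embeds into the weight-$\rho$ part of $V/\widetilde{M}$, which is exactly $V_\rho/(\widetilde{M}\cap V_\rho)$, and the nonzero vector $m$ is carried into this one-dimensional space. Since $\xpm$ acts as a scalar $\mu$ on a one-dimensional invariant space, we get $\xpm m=\mu m$, and it remains only to show $\mu\in\{\lambda_1,\lambda_2\}$.

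Finally I would invoke the factorization of the characteristic polynomial. Choosing a basis of $V_\rho$ adapted to $\widetilde{M}\cap V_\rho$ makes $\xpm$ block upper-triangular, so its characteristic polynomial on $V_\rho$ is the product of the characteristic polynomial on $\widetilde{M}\cap V_\rho$, namely $(t-\lambda_1)(t-\lambda_2)$ since $v_1,v_2$ are eigenvectors with distinct eigenvalues, and the linear factor $(t-\mu)$ coming from the quotient. Thus the eigenvalues of $\xpm$ on $V_\rho$ are $\lambda_1,\lambda_2,\mu$; as $\xpm$ is assumed to have only the two distinct eigenvalues $\lambda_1,\lambda_2$ there, and $k$ is algebraically closed so that these exhaust the roots of the characteristic polynomial, $\mu$ must equal $\lambda_1$ or $\lambda_2$. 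I expect the only real care to be needed in the weight-grading bookkeeping that places $m$ inside the one-dimensional quotient $V_\rho/(\widetilde{M}\cap V_\rho)$; once that is clean, the triangular-block eigenvalue count finishes the argument at once.
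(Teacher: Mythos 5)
Your proof is correct, but it reaches the conclusion by a different mechanism than the paper. The paper argues by contradiction and by an explicit lifting: assuming $\xpm m=\mu m$ in the quotient with $\mu\notin\{\lambda_1,\lambda_2\}$, it writes $\xpm m=\mu m+\widetilde{m}_1+\widetilde{m}_2$ in $V$, decomposes the error term along the eigenvectors $v_1,v_2$, and corrects $m$ by $\frac{1}{\mu-\lambda_1}\widetilde{m}_1+\frac{1}{\mu-\lambda_2}\widetilde{m}_2$ to manufacture a genuine eigenvector of $\xpm$ in $V_\rho$ with the forbidden eigenvalue $\mu$. You instead invoke the block-triangular factorization of the characteristic polynomial of $\xpm$ on $V_\rho$ relative to the invariant subspace $\widetilde{M}\cap V_\rho$, getting $(t-\lambda_1)(t-\lambda_2)(t-\mu)$ and concluding $\mu\in\{\lambda_1,\lambda_2\}$ from the hypothesis that these exhaust the spectrum. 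Both arguments are instances of the spectral inclusion $\mathrm{spec}(A|_{W/U})\subseteq\mathrm{spec}(A|_W)$, and both use the same two hypotheses in the same essential way; the trade-off is that your version needs the ambient count $\dim V_\rho=3$ (true in the paper's standing setup $V=S\otimes T$ with $S,T$ three-dimensional, and needed to see that $m$ spans the one-dimensional quotient and hence is automatically an eigenvector there), whereas the paper's lifting needs $\widetilde{M}\cap V_\rho$ to be spanned by $\xpm$-eigenvectors with eigenvalues different from $\mu$ but works in any dimension. Your bookkeeping that identifies the weight-$\rho$ part of $V/\widetilde{M}$ with $V_\rho/(\widetilde{M}\cap V_\rho)$ is sound because $H$ acts semisimply and $\widetilde{M}$ is $H$-stable, so the argument goes through.
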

\begin{proof}
Suppose, on the contrary, that $X_+ X_- m = \mu m$ (equality being in the
quotient space) for some $\mu \neq \lambda_i$ for all $i$. We will
show that there is a vector $v \in V$ with $Hv = \rho v$ and $X_+ X_-
v = \mu v$, a contradiction.

For the remainder of the proof, the action of $\sltwo$ will be on the
full space $V$, not the quotient space.

We know that $X_+ X_- m = \mu m + \widetilde{m}$ for some $\widetilde{m} \in
\widetilde{M}$. By the assumptions of the lemma, we can write $\widetilde{m} =
\widetilde{m}_1 + \widetilde{m}_2$, where $X_+ X_- \widetilde{m}_i = \lambda_i
\widetilde{m}_i$. We set
\begin{equation}
v = m + \frac{1}{\mu-\lambda_1} \widetilde{m}_1 + \frac{1}{\mu-\lambda_2}
\widetilde{m}_2.
\end{equation}
Since $\mu \neq \lambda_1$ or $\lambda_2$, $v$ is well-defined, and it is
easily seen that $X_+ X_- v = \mu v$.
\end{proof}

We can consider other eigenvalue equations on the weight spaces of a
tensor product, in particular for $X_+^3$ and $X_-^3$. The proof of
the following lemma is straightforward and is left to the reader.

\begin{lem}\label{lemma:cubed}
The action of $X_+^3$ and $X_-^3$ on any weight vector $v$ in the
module $V = S \otimes T$, where $S$ and $T$ are of the form
$\widetilde{T}(b,\frac{1}{b},0)$ or $T(b,c,d)$, is given by:
\begin{enumerate}
\item $\widetilde{T}(b,\frac{1}{b},0) \otimes
  \widetilde{T}(\beta,\frac{1}{\beta},0)$: $X_+^3v =
  \frac{b+\beta}{b\beta}$, $X_-^3 v = 0$
\item $\widetilde{T}(b,\frac{1}{b},0) \otimes T(\beta,\gamma,\delta)$:
  $X_+^3v = (\frac{1}{b}+\beta \alpha_1 \alpha_2) v$, $X_-^3 v =
  \gamma v$
\item $T(b,c,d) \otimes T(\beta,\gamma,\delta)$: $X_+^3v =
  (b a_1 a_1 + \beta \alpha_1 \alpha_2) v$, $X_-^3 v = (c+\gamma) v$
\end{enumerate}
Furthermore, assuming $v$ is some weight vector, $V$ contains highest
weight vectors if and only if $X_+^3 v = 0$, and lowest weight vectors
if and only if $X_-^3 v = 0$.
\end{lem}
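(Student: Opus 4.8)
The plan is to reduce everything to one characteristic-$3$ phenomenon: the cube of a primitive element acts diagonally on a tensor product. Since $X_\pm$ acts on $V=S\otimes T$ through the coproduct as $X_\pm\otimes 1+1\otimes X_\pm$, and the two summands $X_\pm\otimes 1$ and $1\otimes X_\pm$ commute as operators on $S\otimes T$, the binomial theorem gives
\[
(X_\pm\otimes 1+1\otimes X_\pm)^3=\sum_{k=0}^{3}\binom{3}{k}\,X_\pm^{\,k}\otimes X_\pm^{\,3-k}.
\]
Because $\binom{3}{1}=\binom{3}{2}=0$ in $k$, every cross term vanishes and only the outer terms survive, so that on $V$ we have the additivity relation
\[
X_\pm^3=X_\pm^3\otimes 1+1\otimes X_\pm^3 .
\]
Thus it suffices to compute $X_+^3$ and $X_-^3$ on each irreducible factor separately and add the results.

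Next I would record the single-factor computations from the explicit matrices \eqref{eq:T} and \eqref{eq:Ttilde}. A direct multiplication shows that on $T(b,c,d)$ one gets $X_+^3=b\,a_1 a_2\cdot\mathrm{Id}$ and $X_-^3=c\cdot\mathrm{Id}$, while on $\widetilde{T}(b,\tfrac1b,0)$ the constraint $c=\tfrac1b,\ d=0$ forces $a_1=a_2=0$, so that $X_-$ is nilpotent with $X_-^2=0$ and one has $X_+^3=\tfrac1b\cdot\mathrm{Id}$ and $X_-^3=0$. (Abstractly, $X_\pm^3$ is central in $U(\sltwo)$ in characteristic $3$, hence acts by a scalar on any irreducible module by Schur's lemma; the matrix computation only pins down that scalar.) Substituting these scalars into the additivity relation gives exactly the three pairs of eigenvalues listed in (1)--(3), the $\beta\alpha_1\alpha_2$ and $b\,a_1a_2$ contributions coming from the respective $T$-factors. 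Since each $X_\pm^3$ is then a scalar on all of $V$, the displayed eigenvalue equations hold on every weight vector $v$, as claimed.

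For the final assertion, write $X_+^3=\mu_+\cdot\mathrm{Id}$ on $V$. If $\mu_+\neq 0$, then $X_+$ is invertible (its inverse is $\mu_+^{-1}X_+^2$), so $\ker X_+=0$ and $V$ has no highest weight vector; equivalently $X_+^3 v\neq 0$ for $v\neq 0$. If $\mu_+=0$, then $X_+$ is nilpotent, so $\ker X_+\neq 0$, and since $[H,X_+]=2X_+$ shows that $X_+$ raises the $H$-eigenvalue by $2$ and hence preserves the weight-space decomposition, this kernel contains a genuine weight vector, i.e.\ a highest weight vector. This establishes the equivalence ``$V$ contains highest weight vectors $\iff X_+^3 v=0$''; the statement for lowest weight vectors and $X_-^3$ follows by the identical argument with $X_+$ and $X_-$ interchanged.

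The only genuine content lies in the collapse of the cross terms in the first display, which is special to characteristic $3$; everything afterward is a short matrix computation together with the invertible-or-nilpotent dichotomy. I expect no structural obstacle, and the only bookkeeping to watch is keeping the two sets of auxiliary parameters $(a_1,a_2)$ and $(\alpha_1,\alpha_2)$ distinct when reading off the sums.
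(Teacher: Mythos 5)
Your proof is correct; the paper explicitly leaves this lemma to the reader, and your argument --- the collapse of the binomial expansion of $(X_\pm\otimes 1+1\otimes X_\pm)^3$ in characteristic $3$, the explicit matrix cubes on each factor, and the invertible-versus-nilpotent dichotomy (with $\ker X_\pm$ being $H$-invariant) for the final assertion --- is exactly the intended straightforward verification. Your computation also confirms that the ``$b a_1 a_1$'' in item (3) of the statement is a typo for $b a_1 a_2$, consistent with how that quantity is used elsewhere in the paper.
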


Combining Lemmas \ref{lemma:eigenvalues} and \ref{lemma:cubed}, we get:

\begin{lem}\label{lemma:params}
Given a module $T(b,c,d)$ with no lowest weight vectors (i.e.~$c \neq
0$), or a module $\widetilde{T}(b,\frac{1}{b},0)$, we can determine $b$, as
well as $c$ and $d$ (where applicable), from the actions of $X_+ X_-$
and $X_-^3$ on the weight spaces. If $c=0$ in $T(b,c,d)$, we may still
determine $b$ from the action of $X_+^3$ if we know $d$.
\end{lem}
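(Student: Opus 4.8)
The plan is to prove each clause by writing out the operators $X_+X_-$, $X_-^3$, and $X_+^3$ explicitly in the weight basis of \eqref{eq:T} and \eqref{eq:Ttilde} and reading the parameters off the resulting scalars. Two structural facts make this possible. First, $X_+X_-$ commutes with $H$ (as computed just above the statement), so it preserves each weight space; since the weight spaces of a $3$-dimensional irreducible are one-dimensional, $X_+X_-$ acts there as a scalar, and likewise $X_-^3$ and $X_+^3$ are scalar operators (this is the single-factor content of Lemma \ref{lemma:cubed}). Second, multiplying out \eqref{eq:T} gives, on $\Tbcd$, the diagonal action $X_+X_-=\mathrm{diag}(a_1,a_2,bc)$ together with $X_-^3=c\cdot\mathrm{id}$ and $X_+^3=b\,a_1a_2\cdot\mathrm{id}$, while multiplying out \eqref{eq:Ttilde} with $c=\tfrac{1}{b}$ and $d=0$ (so $a_1=a_2=0$ by \eqref{a1a2}) gives, on $\Ttil$, the actions $X_+X_-=\mathrm{diag}(0,0,1)$, $X_-^3=0$, and $X_+^3=\tfrac{1}{b}\cdot\mathrm{id}$. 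Lemma \ref{lemma:eigenvalues} is what allows these scalars, computed on a tensor product via Lemma \ref{lemma:cubed}, to be attributed to an indecomposable summand.

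I would first dispose of $\Tbcd$ with $c\neq0$. The scalar $X_-^3=c\cdot\mathrm{id}$ returns $c$ at once, and it is nonzero by hypothesis. The remaining data is the observable function sending a weight $\rho$ to the $X_+X_-$-eigenvalue on $V_\rho$, which in the weight basis reads $d-1\mapsto a_1$, $d\mapsto a_2$, $d+1\mapsto bc$. The one genuinely delicate point arises here: in characteristic $3$ the three weights $d-1,d,d+1$ form the coset $d+\mathbb{F}_3$, which is stable under adding $\pm1$, so there is no intrinsic ``lowest'' or ``middle'' weight and $d$ is pinned down only modulo the prime field. By Lemma \ref{lemma:dint} this residual $\mathbb{F}_3$-ambiguity is realized by honest isomorphisms, so we lose nothing by fixing a representative $d$; having done so, the weight space of weight $d+1$ is singled out, its $X_+X_-$-eigenvalue equals $bc$, and dividing by the already-known nonzero $c$ delivers $b$. (When $d\in\mathbb{F}_3$ one may instead normalize $d=0$, making $a_1=a_2=bc-1$, so that $X_+X_-=\mathrm{diag}(bc-1,bc-1,bc)$ and the simple eigenvalue $bc$ is distinguished outright.)

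The degenerate branch $c=0$ inside the $T$-family must be handled separately, since there $X_-^3=0$ detects nothing beyond $c=0$ itself. Here the work is done by $X_+^3$: with $c=0$ one computes $a_1a_2=(d-1)(-d-1)=1-d^2$, so $X_+^3=b(1-d^2)\cdot\mathrm{id}$, and provided $d$ is known and $d\neq\pm1$ (so $1-d^2\neq0$) we recover $b=X_+^3/(1-d^2)$; this is the final sentence of the statement, the excluded values $d=\pm1$ being exactly where the formula degenerates and including the reducible cases \eqref{eq:1}. Finally, for $\Ttil$ the parameters $c=\tfrac{1}{b}$ and $d=0$ are fixed by the family, so only $b$ is at stake; since $X_+X_-=\mathrm{diag}(0,0,1)$ and $X_-^3=0$ are both independent of $b$, the parameter is read off instead from $X_+^3=\tfrac{1}{b}\cdot\mathrm{id}$.

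The main obstacle I anticipate is conceptual rather than computational: the characteristic-$3$ failure of the weight spaces to carry any canonical linear order. This is what forces the $\mathbb{F}_3$-indeterminacy of $d$ and blocks an a priori matching of the eigenvalues $a_1,a_2,bc$ of $X_+X_-$ to particular weight spaces. Everything turns on recognizing, via Lemma \ref{lemma:dint}, that this indeterminacy reflects a genuine isomorphism, so that fixing a weight representative is legitimate; after that the parameter extraction, the case split at $c=0$, and the exclusion of $d=\pm1$ are all routine.
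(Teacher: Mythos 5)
Your proposal is correct and follows essentially the same route as the paper: both compute that $X_+X_-$, $X_-^3$ and $X_+^3$ act on the weight basis of \eqref{eq:T} and \eqref{eq:Ttilde} by the scalars $\mathrm{diag}(a_1,a_2,bc)$, $c$, and $b\,a_1a_2$ (respectively $\mathrm{diag}(0,0,1)$, $0$, and $\tfrac{1}{b}$), and then solve for the parameters. Your additional observations --- that $d$ is pinned down only after choosing which weight space plays the role of $v_3$, the residual shift being absorbed by the isomorphisms of Lemma \ref{lemma:dint}, and that the $c=0$ formula degenerates at $d=\pm 1$ --- are correct refinements of points the paper leaves implicit.
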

\begin{proof}
Let us fix a basis $\{v_1 , v_2 , v_3 \}$ such that the
$\sltwo$-action is given by the matrices of \eqref{eq:T} or
\eqref{eq:Ttilde}.

For $\widetilde{T}(b,\frac{1}{b},0)$ we must simply note that $X_+^3 v_1 =
\frac{1}{b} v_1$.

For $T(b,c,d)$, where $c \neq 0$ we note that the equations
\begin{equation}
 \begin{split}
 &X_+ X_- v_1 = (bc-1+d) v_1, \\
 &X_+ X_- v_3 = bc v_3, \\
 &X_-^3 v_1 = c v_1
\end{split}
\end{equation}
allow us to determine $b$, $c$, and $d$, since $c \neq 0$.

In the case of $T(b,0,d)$ where $d$ is known, we note that
\begin{equation}
X_+^3 v_1 = b a_1 a_2 v_1 = b (1-d^2) v_1,
\end{equation}
allowing us to determine $b$.
\end{proof}

The number of linearly independent eigenvectors of $X_+ X_-$ will be
important to us when determining the structure of a decomposition, as
the following lemma shows.

\begin{lem}\label{lemma:decomp}
Let there be either no highest or no lowest weight vectors in
$V$.

{\em 1)} If $X_+ X_-$ has three linearly independent eigenvectors $v_1$, $v_2$,
and $v_3$ in $V_\rho$ for an eigenvalue $\rho$ of $H$, then there
are 3\defis dimensional irreducible submodules $M,M',M'' \subset V$ such
that $V = M \oplus M' \oplus M''$.

{\em 2)} If $X_+ X_-$ has only two linearly independent eigenvectors $v_1$ and
$v_2$ in $V_\rho$, then there are
3\defis dimensional submodules $M,M',M'' \subset V$ such that $V = M \c+ (M'
\oplus M'')$. In this case, $M'$ and $M''$ are irreducible.
\end{lem}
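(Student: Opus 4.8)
The plan is to build the required submodules explicitly out of the eigenvectors of $X_+X_-$ on the single weight space $V_\rho$, exploiting the fact that one of $X_-$, $X_+$ is invertible. Assume first that $V$ has no lowest weight vectors; by Lemma \ref{lemma:cubed} this means $X_-^3$ acts on all of $V$ as multiplication by a nonzero scalar $c$, so $X_-$ is a bijection cyclically permuting the three weight spaces $V_\rho$, $V_{\rho-2}$, $V_{\rho+2}$. The case of no highest weight vectors is handled identically, with the roles of $X_+$ and $X_-$ (and of $X_+^3$ and $X_-^3$) exchanged, so I treat only the first case.

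The key step is to show that every eigenvector $v \in V_\rho$ of $A := X_+X_-$, say $Av = \lambda v$, generates a $3$-dimensional irreducible submodule $M_v = \langle v, X_- v, X_-^2 v\rangle$. These three vectors are nonzero (as $X_-$ is injective) and lie in distinct weight spaces, so $\dim M_v = 3$ and $M_v$ is automatically $H$-invariant; closure under $X_-$ is immediate from $X_-^3 v = cv$. Closure under $X_+$ is the crux, and I would extract it from the relation $X_-X_+ = X_+X_- - H$: one finds $X_+(X_- v) = Av = \lambda v$ and, by the same commutator computation, $X_+(X_-^2 v) = A(X_- v) = (\lambda + \rho - 2)X_- v$, both in $M_v$; for the remaining generator, $X_-(X_+ v) = (A-H)v = (\lambda-\rho)v$ while $X_-(X_-^2 v) = cv$, so injectivity of $X_-$ forces $X_+ v = \frac{\lambda-\rho}{c}X_-^2 v \in M_v$. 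Hence $M_v$ is a submodule, and it is irreducible because any submodule is a sum of its one-dimensional weight components and $X_-$ cyclically links all three of them, so no proper such sum is invariant.

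For part (1), the three independent eigenvectors $v_1, v_2, v_3$ yield submodules $M_i = M_{v_i}$. Since $M_i \cap V_\rho = \langle v_i\rangle$ and $X_-$ is injective, the vectors $X_- v_i$ (resp.\ $X_-^2 v_i$) stay independent in $V_{\rho-2}$ (resp.\ $V_{\rho+2}$); grouping any relation $\sum m_i = 0$ by weight space forces each $m_i = 0$, so the sum is direct, and a dimension count $3+3+3 = 9$ gives $V = M_1 \oplus M_2 \oplus M_3$. For part (2), having only two independent eigenvectors means that $A$ on $V_\rho$ consists of one $2\times 2$ Jordan block and one $1\times 1$ block: there are eigenvectors $v_1$ (eigenvalue $\lambda_1$) and $v_2$, and a generalized eigenvector $v_1'$ with $(A-\lambda_1)v_1' = v_1$. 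I take $M' = M_{v_1}$ and $M'' = M_{v_2}$, irreducible $3$-dimensional submodules with $M' \cap M'' = 0$, so $M' \oplus M''$ is a $6$-dimensional submodule. Setting $M = \langle v_1', X_- v_1', X_-^2 v_1'\rangle$, the $V_\rho$-components $v_1', v_1, v_2$ are independent, so via injectivity of $X_-$ the subspace $M$ is a $3$-dimensional complement, giving $V = M \c+ (M' \oplus M'')$. Here $M$ itself need not be a submodule, but modulo $M' \oplus M''$ the vector $v_1'$ becomes an eigenvector, since $Av_1' = \lambda_1 v_1' + v_1$ with $v_1 \in M'$; this is exactly the semidirect-sum picture.

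The main obstacle is the key step of the second paragraph: checking that $M_v$ is closed under $X_+$ and irreducible. This is where the $\sltwo$ relations and the scalar action of $X_-^3$ (equivalently, the invertibility of $X_-$ guaranteed by the absence of lowest weight vectors) are indispensable, since a priori $X_+ v$ need not lie in $\langle v, X_- v, X_-^2 v\rangle$, and pinning it down requires the injectivity argument above. Everything else is bookkeeping with weight spaces and a dimension count.
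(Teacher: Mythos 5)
Your proposal is correct and follows essentially the same route as the paper: from each eigenvector of $X_+X_-$ on a single weight space one builds a $3$\defis dimensional irreducible submodule by repeatedly applying the injective operator among $X_\pm$ (you use $X_-$ in the no-lowest-weight case, the paper symmetrically uses $X_+$ in the no-highest-weight case), verifying closure under the other operator via the commutation relations and injectivity, and then counting eigenvectors to get either a direct sum or, via the remaining (generalized eigen-)vector and the quotient modulo $M'\oplus M''$, the semidirect sum. Your write-up is somewhat more explicit than the paper's about the closure computation and the Jordan-block structure in case 2, but the underlying argument is the same.
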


\begin{rmk}
In the cases we consider, $X_+ X_-$ will always have at least two
distinct eigenvectors on any $V_\rho$.
\end{rmk}

\begin{proof}
We will prove this for the case where $X_+ v \neq 0$ for all $v \in
V$. The other case is analogous.

Clearly, in an irreducible submodule of $\sltwo$, for any weight
vector $v$, we have $X_+ X_- v = \lambda v$ for some $\lambda \in
k$. Further, a one-dimensional subspace belongs to at most one
irreducible submodule. So we can have at most as many irreducible
submodules as we have eigenvectors of $X_+ X_-$ in some
$V_\rho$.

Furthermore, if $X_+ X_- v = \lambda v$ for some $\lambda \in k$ and
$v$ a weight vector, then $v$, $X_+ v$, and $X_+^2 v$ form an
irreducible submodule. From Lemma \ref{lemma:cubed}, it is clear that
$X_+^3 v$ is a nonzero multiple of $v$. From the relations
of $\sltwo$, it is easy to check that $X_- X_+^n v$ is a multiple of
$X_+^{n-1} v$ for any nonnegative integer $n$. So these three vectors do
form a submodule, and it is irreducible because $X_+^n$ sends any
subspace to any other for some $n$.

This completes the proof of heading 1), since in that case we
can build three 3\defis dimensional irreducible submodules by the
procedure in the last paragraph. For heading 2), we can only
build two such irreducible submodules $M'$ and $M''$ this way. To
complete the proof, we consider $M = V / (M' \oplus M'')$ and the
action of $\sltwo$ on this quotient. We take any nonzero weight vector
$m \in M$ and note again that $m$, $X_+ m$, and $X_+^2 m$ form a basis
of this quotient module, and that it is irreducible. They are nonzero
because $X_+^3 m$ is a nonzero multiple of $m$. They are linearly
independent because they are of different weights, and they form an
irreducible submodule because there is no invariant subspace ($X_+^n$
sends any subspace to any other for some $n$).
\end{proof}

\section{Notation}\label{sec:not}

For the sake of convenience, we fix the notation for our modules for
the rest of the paper. We denote vectors in our modules according to
the following scheme:
\begin{equation}
\begin{split}
 &q_i \in \mtwo\ \text{ for } i=1,2; \\
 &t_i \in \widetilde{T} (b,\frac{1}{b},0),\ u_i \in \widetilde{T}
 (\beta,\frac{1}{\beta},0)\ \text{ for } i=1,2,3; \\
 &v_i \in T(b,c,d),\ w_i \in T(\beta,\gamma,\delta)\ \text{ for }
 i=1,2,3.
\end{split}
\end{equation}
The index in the subscript refers to the weight of the vector. For
$\mtwo$, the vector $q_1$ is of weight 1 and $q_2$ is of weight $-1$. For $t_i$,
$u_i$, $v_i$, and $w_i$, we refer to the matrix representations
\eqref{eq:T} and \eqref{eq:Ttilde} and set $t_i=e_i$,
$u_i=e_i$, etc., where $\{e_i \mid i=1,2,3\}$ is the usual basis of
$k^3$.

In addition, since we will need them so often and the notation becomes
awkward, we will omit the ``$\otimes$'' when writing tensor products
of vectors. For example, $q_1 \otimes v_2 \in \mtwo \otimes T(b,c,d)$
will be expressed simply as $q_1 v_2$.

\section{Case-by-case calculations}\label{sec:cases}

All calculations were done with the assistance of the
\texttt{Mathematica}-based package \texttt{SuperLie} (\cite{superlie},
\cite{grozman-leites-superlie}).

Due to space considerations, and for the flow of arguments,
calculations will not be repeated here in detail. Instead, we refer to
the \texttt{Mathematica} notebooks, which have been made available
online at:
\begin{equation*}
\href{http://personal-homepages.mis.mpg.de/clarke/Tensor-Calculations.tar.gz}{\texttt{http://personal-homepages.mis.mpg.de/clarke/Tensor-Calculations.tar.gz}}
\end{equation*}

We have focused on producing explicit decompositions where
possible. Many decompositions could have also been deduced using more
general arguments based on the lemmas of \S\ref{sec:prelim}, as was
done in \ref{sec:bc0bcdNothing} or \ref{sec:bcdbcdNothing}.

\subsection{The case $V = \mtwo \otimes \mtwo$}\label{subsec:2times2}

This is the simplest case of all; in fact, there is no difference from
the decomposition in characteristic 0. We start from the two highest
weight vectors, $r_1 r_1$ and $r_2 r_1 - r_1 r_2$. Applying $X_-$
gives an irreducible submodule of the form $\mthree$ from the first,
and $\mone$ from the second. Hence, we have a direct sum of two
irreducible modules,
\begin{equation}
\mtwo \otimes \mtwo = \mthree \oplus \mone.
\end{equation}

This could have been deduced from more general considerations as
well. Working over fields of characteristic $p$, if we have two
modules of the form $\underline{N}$ and $\underline{M}$, where
$N,M \in \{1,\ldots,p\}$ (defined in \eqref{eq:N}), they have
highest weights $N-1$ and $M-1$, respectively. Taking their tensor
product $\underline{N} \otimes \underline{M}$ will give us vectors
with weights at most $N+M-2$. If $N+M-2 < p$, then we will see
behavior that is no different from the case of characteristic~0.
Things start getting interesting when $N+M-2 \geq p$, which we
will have in all following cases.

\subsection{The case $V = \mtwo \otimes \widetilde{T}(b,\frac{1}{b},0)$}

In this case, we find two highest weight vectors, $q_2 t_1$, and $q_2
t_2$. Applying $X_-$ to these, we construct two 3\defis dimensional
irreducible submodules that supply a complete decomposition. From $q_2
t_1$, we get the module $T(\frac{1}{b},0,0)$, and from $q_2 t_2$, we
get the module $\widetilde{T}(b,\frac{1}{b},0)$. So
\begin{equation}
\mtwo \otimes \widetilde{T}\Big(b,\frac{1}{b},0\Big) =
T\Big(\frac{1}{b},0,0\Big) \oplus
\widetilde{T}\Big(b,\frac{1}{b},0\Big).
\end{equation}

\subsection{The case $V = \mtwo \otimes T(b,c,d)$}

In this case, we have the lowest weight vectors $q_2 v_3$ and $q_1 v_3
- q_2 v_2$ if and only if $c=0$. We have highest weight vectors if and
only if we are in one of the following situations:
\begin{enumerate}
\item $q_1 v_1$ and $q_1 v_2 + (1-d) q_2 v_1 \Longleftrightarrow b=0$,
\item $q_1 v_3$ and $q_1 v_1 - b q_2 v_3 \Longleftrightarrow 1-bc+d=0$,
\item $q_1 v_2$ and $q_1 v_3 + (1-bc+d) q_2 v_2 \Longleftrightarrow 1-bc-d=0$.
\end{enumerate}
The ``only if'' part of these statements is provided by Lemma
\ref{lemma:cubed}.

Furthermore, the eigenvalues of $H$ are $d$ and $d \pm 1$. Hence,
whenever $d=0$, 1, or 2, we will have different behavior---e.g., the
appearance of submodules like $\mthree$ and $\mone \rightarrow
\mtwo$.

Therefore, we divide our computations into subcases. For the precise
breakdown of these subcases, we note that for $c=0$, whether we have
the highest weight vectors described in (2) and (3) above depends only
on $d$---we have (2) if and only if $d=-1$ and (3) if and only if
$d=1$. Furthermore, if $c \neq 0$, then we need not consider $d=\pm 1$
separately from $d=0$, since in both of these cases $T(b,c,d)$ is
isomorphic to a module with $T(b',c',0)$ for some $b'$ and $c'$ by
Lemma \ref{lemma:dint}.

\subsubsection{The subcase $c=0$; $d=0$; $b=0$}
In this case, by acting on the two lowest and highest weight vectors
listed above, we compute a submodule of the form $\mone \to \mtwo
\gets \mone$. We quickly verify that this contains all highest and
lowest weight vectors.

The full module has dimension six, so we do not yet have a complete
decomposition. Since any weight vectors not contained in $\mone \to
\mtwo \gets \mone$ have weights 1 and $-1$, we know that they will
form a module of the form $\mtwo$ after quotienting by $\mone \to
\mtwo \gets \mone$ (to form two modules $\mone$, they would have to
have weight 0). A direct computation shows that all together, the
module decomposes as $M_1$.

\subsubsection{The subcase $c=0$; $d=0$; $b \neq 0$}\label{sec:2bcd00}
Here, we only have lowest weight vectors to work with. Acting on them
by $X_+$ gives rise to one irreducible submodule,
$M:=\widetilde{T}(\frac{1}{b},b,0)$, which contains both lowest weight
vectors.

Since there are no more highest or lowest weight vectors, there are
now two possibilities. Either the remainder of the full module forms a
submodule of the form $T(b',c',d')$ with no highest or lowest weight
vectors, or the remainder forms irreducible submodules only upon
taking some quotients.

The first possibility can hold if and only if $X_+ X_-$ has a basis of
eigenvectors for each weight space. A quick check shows that the
minimal polynomial of $X_+ X_-$ is $(\lambda-1)^2$ when acting on the
space of weight 1 vectors. Hence, the module contains no more
irreducible submodules.

A direct computation shows us now that the quotient module $V/M$ is
again of the form $\widetilde{T}(\frac{1}{b},b,0)$, so we get the complete
decomposition
\begin{equation}
\widetilde{T}\Big(\frac{1}{b},b,0\Big) \c+
\widetilde{T}\Big(\frac{1}{b},b,0\Big).
\end{equation}

\subsubsection{The subcase $c=0$; $d=1$; $b=0$}\label{sec:2bcd010}
We begin by acting on the lowest weight vectors by $X_+$, which gives
us two irreducible submodules, $\mthree$ and $\mone$. This exhausts
the lowest weight vectors, yet $V$ is not yet completely
decomposed \Tire we are missing a 2\defis dimensional subspace.

Since $1-bc-d=1-d=0$, there is one remaining highest weight
vector. Acting on it by $X_-$ gives us the final submodule $\mtwo$, which is
glued into $\mone$ via $X_-$. Hence, we get
\begin{equation}
\mthree \oplus
(\mtwo \to \mone).
\end{equation}

\subsubsection{The subcase $c=0$; $d=1$; $b \neq 0$}\label{sec:2bcd01}
As in the last case, we immediately get two irreducible submodules,
$\mthree$ and $\mone$. Here, there are no remaining highest weight
vectors, but we note that $\xpm$ has a basis of eigenvectors for the
space of weight 1 vectors. We use this to compute that the remaining
2\defis dimensional submodule is irreducible after quotienting by
$\mone$, so we get
\begin{equation}
\mthree \oplus (\mtwo \c+ \mone).
\end{equation}

\subsubsection{The subcase $c=0$; $d=2$; $b=0$}
This case proceeds exactly analogously to Section \ref{sec:2bcd010}. However,
since we have different highest weight vectors (here $1-bc+d=1+d=0$),
we get the slightly different decomposition
\begin{equation}
\mthree
\oplus (\mone \to \mtwo).
\end{equation}

\subsubsection{The subcase $c=0$; $d=2$; $b \neq 0$}
As above, this is completely analogous to Section \ref{sec:2bcd01},
but we get the decomposition
\begin{equation}
\mthree \oplus (\mone \c+ \mtwo).
\end{equation}

\subsubsection{The subcase $c=0$; $d \neq 0,1,2$}
In this case, we may have highest weight vectors if $b=0$, but it
turns out that in either case, acting on the lowest weight vectors by $X_+$
immediately gives us a complete decomposition,
\begin{equation}
T\Big(\frac{b(d-1)}{d},0,d-1\Big) \oplus
T\Big(\frac{b(d+1)}{d},0,d+1\Big).
\end{equation}

\subsubsection{The subcase $c \neq 0$; $d=0$; $b=0$}
By acting on the highest weight vectors with $X_-$, we get a
module of the form $M := T(0,c,1)$, which exhausts the highest weight
vectors. Since there are no more highest or lowest weight vectors, we
have the same situation as in Section \ref{sec:2bcd00}. As there, we
can check that $\xpm$ does not have a basis of eigenvectors for the
space of weight 1 vectors. We then directly compute that $V/M$ is again
of the form $T(0,c,1)$. In all, we get
\begin{equation}
T(0,c,1) \c+ T(0,c,1)
\end{equation}

\subsubsection{The subcase $c \neq 0$; $d=0$; $b=\frac{1}{c}$}
In this case there are three highest weight vectors, since
$1-bc+d=1-bc-d=0$. By acting on these with $X_-$, we get a complete
decomposition,
\begin{equation}
T(0,c,1) \oplus T(0,c,1).
\end{equation}

\subsubsection{The subcase $c \neq 0$; $d=0$; $b \neq 0$ or $\frac{1}{c}$}\label{sec:2bcd0}
Here there are no highest or lowest weight vectors. However, the
minimal polynomial for $\xpm$ acting on the space of weight 0 vectors
is $\lambda^2 + bc\lambda + bc(bc-1)$, which has the two distinct
roots $bc \pm \sqrt{bc}$. Hence, $\xpm$ has a basis of
eigenvectors for this space. Explicitly solving for these eigenvectors
and acting on them by $X_{\pm}$ gives the decomposition
\begin{equation}
 T\Big(b+\sqrt{\frac{b}{c}},c,1\Big) \oplus
 T\Big(b-\sqrt{\frac{b}{c}},c,1\Big).
\end{equation}

\subsubsection{The subcase $c \neq 0$; $d \neq 0,1$ or $2$; $b=0$}
In this case, acting by $X_-$ on the two highest weight vectors gives
us a complete decomposition,
\begin{equation}
T(0,c,d+1) \oplus T(0,c,d-1).
\end{equation}

\subsubsection{The subcase $c \neq 0$; $d \neq 0,1$ or $2$; $1-bc+d=0$}
Here again, acting by $X_-$ on the two highest weight vectors gives
us a complete decomposition,
\begin{equation}
T(0,c,d) \oplus T(0,c,d+1).
\end{equation}

\subsubsection{The subcase $c \neq 0$; $d \neq 0,1$ or $2$; $1-bc-d=0$}
Here again, acting by $X_-$ on the two highest weight vectors gives
us a complete decomposition,
\begin{equation}
T(0,c,d-1) \oplus T(0,c,d).
\end{equation}

\subsubsection{The subcase $c \neq 0$; $d \neq 0,1$ or $2$; $b \neq
  0$; $1-bc\pm d \neq 0$}
As in Section \ref{sec:2bcd0}, there are no highest
or lowest weight vectors. Investigating the action of $\xpm$ on the
space of weight $d$ vectors shows that it has eigenvalues $bc-d \pm
\sqrt{bc+d^2}$, and that $\xpm$ has a basis of eigenvectors for this
space if and only if the eigenvalues are distinct, i.e., if $bc+d^2 \neq 0$.

If $bc+d^2 \neq 0$, we solve for these eigenvectors and act on them by
$X_{\pm}$ to get a complete decomposition,
\begin{equation}
T\Big(b+\frac{d+\sqrt{bc+d^2}}{c},c,d+1\Big) \oplus
T\Big(b+\frac{d-\sqrt{bc+d^2}}{c},c,d+1\Big).
\end{equation}

If $bc+d^2=0$, the two eigenvectors from above degenerate to one, and
acting on it we get a submodule $M := T(b+\frac{d}{c},c,d+1)$. We can then
explicitly compute that $V/M = T(b+\frac{d}{c},c,d+1)$ as well. So in this
case, the complete decomposition is
\begin{equation}
T\Big(b+\frac{d}{c},c,d+1\Big) \c+
T\Big(b+\frac{d}{c},c,d+1\Big).
\end{equation}

\subsection{The case $V = \widetilde{T}(b,\frac{1}{b},0) \otimes
  \widetilde{T}(\beta,\frac{1}{\beta},0)$}

In this case, by Lemma \ref{lemma:cubed}, there are always lowest
weight vectors, namely $t_1 u_1$, $t_1 u_2$, $t_2 u_1$, $t_1 u_3 + t_3
u_1$, and $t_2 u_2$. We have the highest weight vectors $b t_1 u_1 +
t_2 u_3 - t_3 u_2$, $b t_1 u_2 - b t_2 u_1 - t_3 u_3$, and $t_1 u_3 -
t_2 u_2 + t_3 u_1$ if and only if $\frac{b+\beta}{b\beta}=0$, or
equivalently $\beta=-b$. Therefore, we need to split this case into
two subcases.

\subsubsection{The subcase $\beta=-b$}
Here, we start by acting on the highest weight vectors by $X_-$. This
immediately generates three irreducible modules, $\mthree$, $\mone$,
and $\mtwo$. This exhausts all of the highest weight vectors.

Since the full module has two lowest weight vectors not contained in
the above submodules, there are two possibilities. Either these lowest
weight vectors make up a module of the form
$\widetilde{T}(b',\frac{1}{b'},0)$, or they form irreducible modules
only upon quotienting. The first possibility is ruled out since
$X_+^3$ of any weight vector is zero, which is not the case for any
$\widetilde{T}(b',\frac{1}{b'},0)$. Hence, we have the second
possibility.

By acting on the remaining lowest weight vectors by $X_+$, we obtain
a module $\mtwo$ glued into the module $\mone$ via $X_+$, and a
module $\mone$ glued into the module $\mtwo$ via $X_+$. Hence the
full decomposition is
\begin{equation}
\mthree \oplus (\mtwo \gets \mone) \oplus
(\mone \gets \mtwo).
\end{equation}

\subsubsection{The subcase $\beta \neq -b$}
In this case, acting on the lowest weight vectors by $X_+$ immediately
gives us a complete decomposition,
\begin{equation}
T\Big(\frac{b+\beta}{b\beta},0,0\Big) \oplus
\widetilde{T}\Big(\frac{b\beta}{b+\beta},\frac{b+\beta}{b\beta},0\Big)
\oplus
\widetilde{T}\Big(\frac{b\beta}{b+\beta},\frac{b+\beta}{b\beta},0\Big).
\end{equation}

\subsection{The case $V = \Ttil \otimes \Tbgd$}
Again referring to Lemma \ref{lemma:cubed}, we determine that $V$
contains the lowest weight vectors $t_2 w_3$, $b t_1 w_2 - t_3 w_3$,
and $t_1 w_3$ if and only if $\gamma=0$. Furthermore, $V$ contains the
highest weight vectors
\begin{equation}
\begin{split}
 &t_1 w_1 - \beta t_2 w_3 - \beta (1-\beta\gamma+\delta) t_3 w_2, \\
 &t_1 w_2 + (1-\gamma-\delta) t_2 w_1 - \beta (1-\beta\gamma-\delta) t_3 w_3, \\
 &t_1 w_3 + (1-\beta\gamma+\delta) t_2 w_1 + (1-\beta\gamma+\delta)
 (1-\beta\gamma-\delta) t_3 w_1
\end{split}
\end{equation}
if and only if
\begin{equation}
1+b\beta (1-\beta\gamma+\delta)
(1-\beta\gamma-\delta)=0.
\end{equation}
Furthermore, similarly to the case of $\mtwo \otimes \Tbcd$, the eigenvalues
of $H$ are $\delta$ and $\delta \pm 1$.

Based on this, we split this case into subcases. Note that when we
have both that $\gamma=0$ and $\delta=\pm 1$, it is impossible to have
highest weight vectors. In addition, if $\gamma \neq 0$, then we need
not consider the cases $\delta= \pm 1$ separately from $\delta=0$,
since by Lemma \ref{lemma:dint}, $T(\beta,\gamma,\pm 1)$ is isomorphic
to a module $T(\beta',\gamma',0)$ for some $\beta'$, $\gamma'$.

\subsubsection{The subcase $\gamma=0$; $\delta=0$; $\beta=-\frac{1}{b}$}
In this case, we begin by acting on the three available lowest weight
vectors by $X_+$, followed by acting on the highest weight vectors by
$X_-$. This immediately allows us to compute two indecomposable
submodules of the forms $\mthree$ and $\mone \to \mtwo \gets \mone$.

The above submodules have dimension seven, while $V$ has dimension
nine. The weight vectors linearly independent from the above
submodules have weights 1 and $-1$. Since there are no more highest or
lowest weight vectors, these vectors form an irreducible module of the
form $\mtwo$ only after passage to the quotient. A quick check of the
minimal equation of $X_+ X_-$ acting on the space of vectors of weight
1 shows that $X_+ X_-$ has only two eigenvectors on this space. So we
start with an arbitrary vector that is linearly independent from the
submodules of the last paragraph and find that the complete
decomposition is of the form
\begin{equation}
\mthree \oplus (\mtwo \c+ (\mone \to
\mtwo \gets \mone)).
\end{equation}

\subsubsection{The subcase $\gamma=0$; $\delta=0$; $\beta \neq -\frac{1}{b}$}
Here, we have three lowest weight vectors and no highest weight
vectors. Acting on the lowest weight vectors yields two submodules,
$\widetilde{T}(\frac{b}{1+b\beta},\frac{1+b\beta}{b},0)$ and
$T(\frac{1+b\beta}{b},0,0)$, which contain al of the lowest weight
vectors.

One quickly checks that $\xpm$ does not have a basis of eigenvectors
for the space of weight 1 vectors. Therefore, the subspace of
remaining vectors will be irreducible only upon quotienting. A direct
computation shows that the quotient is of the form
$\widetilde{T}(\frac{b}{1+b\beta},\frac{1+b\beta}{b},0)$, giving the
complete decomposition
\begin{equation}
\widetilde{T}\Big(\frac{b}{1+b\beta},\frac{1+b\beta}{b},0\Big) \c+
\Big(\widetilde{T}\Big(\frac{b}{1+b\beta},\frac{1+b\beta}{b},0\Big)
\oplus T\Big(\frac{1+b\beta}{b},0,0\Big)\Big).
\end{equation}

\subsubsection{The subcase $\gamma=0$; $\delta=1$}\label{sec:tildebcd01}
In this case there are three lowest weight vectors but, as noted
above, no highest weight vectors. By acting on the lowest weight
vectors with $X_+$, we immediately get two submodules,
$\widetilde{T}(b,\frac{1}{b},0)$ and $T(\frac{1}{b},0,0)$.

The above submodules contain all of the highest and lowest weight vectors, and we can
check that $X_+ X_-$ does not have a basis of eigenvectors for any
weight space (actually, checking one particular weight space
suffices). Hence, starting with an arbitrary weight vector that is
linearly independent from the two submodules above, we compute the
final submodule, which is irreducible upon quotienting and gives the
decomposition
\begin{equation}
\widetilde{T}\Big(b,\frac{1}{b},0\Big) \c+
\Big(\widetilde{T}\Big(b,\frac{1}{b},0\Big) \oplus
T\Big(\frac{1}{b},0,0\Big)\Big).
\end{equation}

\subsubsection{The subcase $\gamma=0$; $\delta=2$}
This case is completely analogous to that of Section
\ref{sec:tildebcd01}. The end result is also identical.

\subsubsection{The subcase $\gamma=0$; $\delta\neq 0,1$ or $2$; $b\beta
  (1-\delta^2) = -1$} 
For this case, please refer to Section \ref{sec:tildebcd-1}
below. The calculation there is also completely valid for the case
$\gamma=0$.

\subsubsection{The subcase $\gamma=0$; $\delta\neq 0,1$ or $2$; $b\beta
  (1-\delta^2) \neq -1$}
Here, there are no highest weight vectors, but acting on the three
lowest weight vectors by $X_+$ gives the entire decomposition,
\begin{equation}
T(1+b\beta (1-\delta^2),0,\delta-1) \oplus T(1+b\beta
(1-\delta^2),0,\delta) \oplus T(1+b\beta (1-\delta^2),0,\delta+1).
\end{equation}

\subsubsection{The subcase $\gamma \neq 0$; $\delta=0$; $b\beta
  (1-\beta\gamma)^2=-1$}
Here there are no lowest weight vectors, but three highest weight
vectors. Acting on these by $X_-$ yields two 3\defis dimensional
submodules, $T(0,\gamma,0)$ and $T(0,\gamma,-1)$.

A quick check then shows that $X_+ X_-$ does not have a basis of
eigenvectors for any weight space. However, by selecting a weight vector 
linearly independent from the two submodules above and applying
$X_-$ to it, we find that the quotient of $V$ by the two submodules 
above is of the form $T(0,\gamma,-1)$. In all, we get
\begin{equation}
T(0,\gamma,-1) \c+ (T(0,\gamma,0) \otimes T(0,\gamma,-1))
\end{equation}

\subsubsection{The subcase $\gamma \neq 0$; $\delta=0$; $b\beta
  (1-\beta\gamma)^2 \neq -1$}

This case is covered by the calculation of Section
\ref{sec:bc0bcdNothing} below. Note that in the current case, the
condition \eqref{eq:bc0bcdcond} cannot hold. The assumption $\delta=0$
implies $(\delta (\delta+1) (\delta-1))^2 = 0$ and we have assumed
that
\begin{equation}
1 + b
\beta (1 - \beta \gamma + \delta) (1 - \beta \gamma - \delta) \neq
0.
\end{equation}

\subsubsection{The subcase $\gamma \neq 0$; $\delta \neq 0,1$ or $2$; $b\beta
  (1-\beta\gamma+\delta) (1-\beta\gamma-\delta)=-1$}\label{sec:tildebcd-1}

In this case, there are no lowest weight vectors, but three highest
weight ones. Acting on them by $X_-$ yields a full decomposition,
\begin{equation}
T(0,\gamma,\delta-1) \oplus T (0,\gamma,\delta) \oplus T
(0,\gamma,\delta+1).
\end{equation}

\subsubsection{The subcase $\gamma \neq 0$; $\delta \neq 0,1$ or $2$; $b\beta
  (1-\beta\gamma+\delta) (1-\beta\gamma-\delta) \neq -1$}\label{sec:bc0bcdNothing}

Here, we have neither highest nor lowest weight vectors to
exploit. Hence, we must rely on Lemmas \ref{lemma:cubed} and
\ref{lemma:decomp} and examine the eigenspaces of $X_+ X_-$.

We begin with the space
$V_{\delta+1}$. A straightforward computation shows that the
characteristic polynomial of $X_+ X_-$ acting on $V_{\delta+1}$ is
\begin{equation}\label{eq:bc0charpol}
\lambda^3 + (1-\delta^2) \lambda^2 + \lambda - \frac{\gamma}{b}
(1+b\beta (1-\beta\gamma+\delta) (1-\beta\gamma-\delta)).
\end{equation}

To apply Lemma \ref{lemma:decomp}, we must know how many linearly
independent eigenvectors $\xpm$ has in $V_{\delta+1}$; that is, we
must know what the \emph{minimal} polynomial of $\xpm$ is.

Noting that, in a field of characteristic 3, $(\lambda-\mu)^3 =
\lambda^3 - \mu^3$, it is easily seen that \eqref{eq:bc0charpol}
cannot be written as a perfect cube. So it must have at least two
solutions.

We then note that
\begin{equation}\label{eq:bc0mult2}
(\lambda-\mu)^2 (\lambda-\rho) = \lambda^3 + (\mu-\rho) \lambda^2 +
(\mu^2 - \mu\rho) \lambda - \mu^2 \rho.
\end{equation}
By equating coefficients of $\lambda$, we deduce that
\eqref{eq:bc0charpol} and \eqref{eq:bc0mult2} can be equal if and
only if $\mu = 1-\delta^2$ and $\rho = -\delta^2$, \emph{as well as} the following relation between the
parameters of $\Ttil$ and $\Tbgd$ is satisfied:
\begin{equation}\label{eq:bc0bcdcond}
\frac{\gamma}{b}(1+b\beta (1-\beta\gamma+\delta) (1-\beta\gamma-\delta) = \mu^2 \rho =
-\delta^2 (1-\delta)^2 (1+\delta)^2.
\end{equation}
Of course, even if the characteristic polynomial has a root of
multiplicity two, $\xpm$ may still have a basis of eigenvectors if its
minimal polynomial factors into distinct linear factors, that is if $\xpm$
satisfies the equation
\begin{equation}
(\xpm - (1-\delta^2 I)) (\xpm + \delta^2
I)=0,
\end{equation}
where $I$ is the identity operator on $V_{\delta+1}$. However, a direct
computation shows that this is never the case.

This argumentation shows that we have a direct sum decomposition,
\begin{equation}
T(b_1,c_1,d_1) \oplus T(b_2,c_2,d_2) \oplus T(b_3,c_3,d_3),
\end{equation}
if \eqref{eq:bc0bcdcond} does not hold. If \eqref{eq:bc0bcdcond} holds,
we have a decomposition involving a semidirect sum,
\begin{equation}
T(b_3,c_3,d_3) \c+
(T(b_1,c_1,d_1) \oplus T(b_2,c_2,d_2)).
\end{equation}

Now we are interested in determining the possible values of the
parameters $b_i$, $c_i$, $d_i$. Let $\rho_1$, $\rho_2$, and $\rho_3$
be the (not necessarily distinct) roots of the polynomial
\eqref{eq:bc0charpol}, and assume that at least $\rho_1 \neq
\rho_2$. Now, let $v_{3,i}$ be the distinct eigenvectors of $\xpm$ in
$V_{\delta+1}$, so that $i$ ranges from one to the number of distinct
eigenvectors. Finally, set $v_{2,i} = X_+ v_{3,i}$ and $v_{1,i} =
X_+^2 v_{3,i}$. We take $\left\{v_{1,i}, v_{2,i}, v_{3,i}\right\}$ as
our basis for $T(b_i,c_i,d_i)$ in the matrix representation
\eqref{eq:T}.

With this notation, and recalling Lemma \ref{lemma:params}, we can
determine the parameters. We already know that $X_-^3 v_{j,i} = \gamma
v_{j,i}$ for all $i$ and $j$, so $c_i = \gamma$ for all $i$. Now, on the
one hand, $\xpm v_{3,i} = b_i c_i v_{3,i}$, and the other hand, we
know that $\xpm v_{3,i} = \rho_i v_{3,i}$. Therefore, $b_i =
\frac{\rho_i}{\gamma}$. Finally, since each $v_{3,i}$ belongs to
$V_{\delta+1}$, we know from the algebra equations for $\sltwo$ that
$v_{2,i}$ is of weight $(\delta+1)+2 = \delta$. So $d_i =
\delta$. This determines the parameters completely for the case that
$\xpm$ has a basis of eigenvectors for $V_{\delta+1}$.

If $\xpm$ has only two distinct eigenvectors for $V_{\delta+1}$, $b_3$
is yet to be determined. To do this, we note the following general
fact. Let $A: W \to W$ be a linear endomorphism of a
finite\defis dimensional vector space $W$, and let $U$ be an $A$\defis invariant
subspace of $W$. Furthermore, let the minimal polynomial of $A: W \to
W$ be $m$, that of $A: U \to U$ be $m_1$, and that of the induced map
$A: W/U \to W/U$ be $m_2$. Then $m = m_1 \cdot m_2$.

With this in mind, assume we have nondistinct eigenvalues $\rho_1 =
\rho_3 = 1-\delta^2$ and $\rho_2 = -\delta^2$. Then the minimal
polynomial of $\xpm$ on $V_{\delta+1}$ is
\begin{equation}
m = (\lambda - (1-\delta^2))^2
(\lambda + \delta^2).
\end{equation}
Using the notation of the previous paragraph with
$A = \xpm$, $W = V_{\delta+1}$, and $U = \mathrm{span}(v_{3,1},v_{3,2})$, we
then have
\begin{equation}
m_1 = (\lambda - (1-\delta^2)) (\lambda + \delta^2),
\end{equation}
implying $m_2 = (\lambda - (1-\delta^2))$. Hence, letting $v_{3,3}$ be
a representative for a nonzero vector in the quotient space $W/U$, we get
$\xpm$ is $\xpm v_{3,3} = (1-\delta^2) v_{3,3}$. Therefore,
\begin{equation}
b_3 = \frac{1-\delta^2}{c}.
\end{equation}

Thus, we have determined all parameters for the decomposition.

\subsection{The case $\Tbcd \otimes \Tbgd$}

By Lemma \ref{lemma:cubed}, we have lowest weight vectors if and only
if $c+\gamma = 0$, and highest weight vectors if and only if $b a_1
a_2 + \beta \alpha_1 \alpha_2 = 0$, where
\begin{equation}
\begin{aligned}
a_1 = bc+d-1, & \quad & \alpha_1 = \beta\gamma+\delta-1, \\
a_2 = bc-d-1, & \quad & \alpha_2 = \beta\gamma-\delta-1.
\end{aligned}
\end{equation}

The lowest weight vectors are then
\begin{equation}
\begin{split}
 &c v_1 w_1 + v_2 w_3 - v_3 w_2, \\
 &c v_1 w_2 - c v_2 w_1 - v_3 w_3, \\
 &v_1 w_3 - v_2 w_2 + v_3 w_1,
\end{split}
\end{equation}
and the highest weight vectors are
\begin{equation}
\begin{split}
 &a_1 a_2 v_1 w_1 - \beta a_2 v_2 w_3 + \beta \alpha_2 v_3 w_2, \\
 &a_1 a_2 v_1 w_2 - a_2 \alpha_1 v_2 w_1 + \beta \alpha_1 v_3 w_3, \\
 &a_1 a_2 v_1 w_3 - a_2 \alpha_2 v_2 w_2 + \alpha_1 \alpha_2 v_3 w_1.
\end{split}
\end{equation}

The highest weight vectors were computed using the following method
(here, as an example, for the weight space $\mathrm{span} (v_1 w_1,
v_2 w_3, v_3 w_2)$). We have, for the action of $X_+$ on this weight
space,
\begin{equation}
\begin{split}
 &X_+ (v_1 w_1) = \beta v_1 w_3 + b v_3 w_1,\\
 & X_+ (v_2 w_3) = a_1 v_1 w_3 + \alpha_2 v_2 w_2, \\
 &X_+ (v_3 w_2) = a_2 v_2 w_2 + \alpha_1 v_3 w_1.
\end{split}
\end{equation}
We first try to cancel the factors of $v_1 w_3$, noting that
\begin{equation}
X_+ (a_1 v_1 w_1 - \beta v_1 w_3) = -\beta \alpha_2 v_2 w_2 + b a_1 v_3 w_1.
\end{equation}
From here, we cancel the factors of $v_2 w_2$:
\begin{equation}
X_+(a_2 (a_1 v_1 w_1 - \beta v_2 w_3) + \beta \alpha_2 v_3 w_2) = (b
a_1 a_2 + \beta \alpha_1 \alpha_2) v_3 w_1
\end{equation}
Since we have assumed that $b a_1 a_2 + \beta \alpha_1 \alpha_2 = 0$,
this tells us that $a_1 a_2 v_1 w_1 - \beta a_2 v_2 w_3 + \beta
\alpha_2 v_3 w_2$ is a highest weight vector.

The problem with this method of computing the highest weight vector is
that the vector we end up with might be the zero vector. We can try to
rectify this by changing the order of the factors that we
cancel. Doing so gives us two additional ``representations'' (by an
abuse of language) for the highest weight vectors:
\begin{equation}
\begin{split}
 &a_1 \alpha_1 v_1 w_1 - \beta \alpha_1 v_2 w_3 - b a_1 v_3 w_2, \\
 &\beta \alpha_2 v_1 w_2 + b a_2 v_2 w_1 - b \beta v_3 w_3, \\
 &\beta \alpha_1 v_1 w_3 + b a_2 v_2 w_2 + b \alpha_1 v_3 w_1
\end{split}
\end{equation}
and
\begin{equation}
\begin{split}
 &\alpha_1 \alpha_2 v_1 w_1 + b a_2 v_2 w_3 - b \alpha_2 v_3 w_2, \\
 &a_1 \alpha_2 v_1 w_2 - \alpha_1 \alpha_2 v_2 w_1 - b a_1 v_3 w_3, \\
 &\beta a_1 v_1 w_3 - \beta \alpha_2 v_2 w_2 + b a_1 v_3 w_1.
\end{split}
\end{equation}

We then hope that one of these ``representations'' is nonzero. However,
for certain choices of the parameters, all three ``representations''
of some highest weight vector are zero. A lengthy but straightforward
analysis shows that the only such choices are given by Table
\ref{table:highestzero}.

\renewcommand*{\arraystretch}{1.2}

\begin{table}
\begin{center}
\begin{tabular}{c|c|c|c}
$\boldsymbol{b}$ & $\boldsymbol{d}$ & $\boldsymbol{\beta}$ &
$\boldsymbol{\delta}$ \\
\hline
0 & 1 & 0 & 1 \\
0 & $-1$ & 0 & $-1$ \\
0 & 1 & $\frac{1}{\gamma}$ & 0 \\
0 & $-1$ & 0 & 1 \\
0 & 1 & 0 & $-1$ \\
0 & $-1$ & $\frac{1}{\gamma}$ & 0 \\
$\frac{1}{c}$ & 0 & 0 & 1 \\
$\frac{1}{c}$ & 0 & 0 & $-1$ \\
$\frac{1}{c}$ & 0 & $\frac{1}{\gamma}$ & 0 \\
\end{tabular}
\caption{Parameter values for which all ``representations'' of some
  highest weight vector are zero}\label{table:highestzero}
\end{center}
\end{table}

We do not have to consider all of these cases separately,
however. Note that in all of these cases, $c \neq 0$. Whenever
$b=\frac{1}{c}$, it is assumed, and whenever $b=0$, we have $d=\pm 1$,
and so we must have $c \neq 0$ if $T(b,c,d)$ is to be irreducible (see
the Introduction). Likewise, $\gamma \neq 0$ in all of these
cases. Therefore, we may use Lemma \ref{lemma:dint} (and the explicit
calculations of its proof) to see that $T(0,c,\pm 1) \simeq
T(\frac{1}{c},c,0)$ and $T(0,\gamma,\pm 1) \simeq
T(\frac{1}{\gamma},\gamma,0)$. Hence, all nine cases are equivalent to
the case where $b=\frac{1}{c}$, $d=0$, $\beta=\frac{1}{\gamma}$, and
$\delta=0$. So this means we must consider that particular case
separately from the other cases where highest weight vectors are
present.

One may ask the related question of whether there is only one highest
weight vector in each weight space. For the lowest weight vectors this
is clear by inspection, but in the case of highest weight vectors, it
is not so easily seen directly whether this is true. We may, however,
consider the characteristic polynomials of $\xpm$ on the weight
spaces. These are
\begin{align}
 &\lambda^3 + \lambda^2 + ((d+\delta)^2 - 1) \lambda + (c+\gamma) (b a_1
a_2 \beta \alpha_1 \alpha_2)\text{ on $V_{d+\delta+1}$,}\\
 &\lambda^3 + \lambda^2 + (d+\delta) ((d+\delta) - 1) \lambda +
 (c+\gamma) (b a_1 a_2 \beta \alpha_1 \alpha_2)\text{ on
 $V_{d+\delta-1}$,}\\
 &\lambda^3 + \lambda^2 + (d+\delta) ((d+\delta) + 1) \lambda + (c+\gamma) (b a_1
 a_2 \beta \alpha_1 \alpha_2)\text{ on $V_{d+\delta}$.}
\end{align}

From this, we clearly see that $\xpm$ can have at most two
eigenvectors with eigenvalue~0, since the geometric multiplicity of
an eigenvalue is at most its algebraic multiplicity in the
characteristic equation. Furthermore, we note that if $c+\gamma=0$,
one of these zero eigenvectors will come from a vector $v$ with $X_- v =
0$. Therefore, we focus on the case where $c+\gamma \neq 0$. Another
lengthy but straightforward calculation shows us that the only cases
where $\xpm$ has two eigenvectors with eigenvalue 0 in some weight
space are again exactly given by Table \ref{table:highestzero}. Hence,
we have no extra cases here to consider specially.

For the action of $H$, we note that the possible weights for
vectors in $V$ are $d+\delta$ and $d+\delta \pm 1$. As in the case of
$\Ttil \otimes \Tbgd$, we will be concerned with when these weights
can be 0, 1, or $-1$, since in these cases we will see phenomena that
are not possible otherwise. Hence, we consider $d+\delta=0$ or $\pm 1$
separately.

However, for $\gamma \neq -c$ (i.e., when there are no lowest weight
vectors), we may use Lemma \ref{lemma:dint} to reduce the cases
$d+\delta=\pm 1$ to the case $d+\delta=0$. This is because we can
assume that either $c$ or $\gamma$ is nonzero. Without loss of
generality, we assume $c \neq 0$. We can then use the isomorphisms
\begin{equation}
T(b,c,0) \simeq T(b',c',1) \simeq T(b'',c'',-1)
\end{equation}
(for appropriate $b'$, $b''$, $c'$ and $c''$) to modify the value of
$d+\delta$.

If $\gamma = -c$, on the other hand, we could have $\gamma = 0 = c$,
in which case Lemma \ref{lemma:dint} is inapplicable. For the
calculations where $d+\delta=\pm 1$, we therefore assume $\gamma = 0 =
c$. This simplifies equations and sometimes allows sharper
decompositions.

Finally, we note the following factorizations for $K = b a_1 a_2 + \beta
\alpha_1 \alpha_2$ for special values of $d+\delta$ when
$\gamma=-c$:
\begin{equation}
\begin{split}
 &d+\delta = 0 \Longrightarrow K = (b+\beta) (1 + bc + b^2 c^2 - d^2 - c\beta - b c^2
 \beta + c^2 \beta), \\
 &d+\delta = 1 \Longrightarrow K = (1+d) ((1-d) b - d\beta), \\
 &d+\delta = -1 \Longrightarrow K = (1-d) ((1+d) b + d\beta).
\end{split}
\end{equation}

With all of this in mind, we are now ready to break down the necessary
subcases. Except in the two subcases where we explicitly state this
to be the case, we assume that we do \emph{not} have all of the
conditions $b=\frac{1}{c}$, $d=0$, $\beta = \frac{1}{\gamma}$, and $\delta=0$. (The same
assumption goes for Tables \ref{table:bcdTimesbcdLowest} and \ref{table:bcdTimesbcd}.)

\subsubsection{The subcase $\gamma=-c$; $b=\frac{1}{c}$; $d=0$; $\beta =
  \frac{1}{\gamma}$; $\delta=0$}

This computation is implied by that in \ref{sec:bcdbcdExceptional}
below. We note that $T(0,0,0) \simeq \mthree$, $T(0,0,1) \simeq \mone
\to \mtwo$, and $T(0,0,-1) \simeq \mtwo \to \mone$.

\subsubsection{The subcase $\gamma=-c$; $d+\delta=0$; $\beta=-b$; $1 +
  bc + b^2 c^2 - d^2 - c\beta - b c^2 \beta + c^2 \beta^2 = 0$}\label{sec:bcdbcd-c01}

In this case, we have three lowest weight vectors. Acting on them by
$X_+$, we obtain three submodules, of the forms $\mone$, $\mtwo$, and
$\mthree$. This exhausts both the highest and lowest weight
vectors.

Examining the action of $\xpm$ on $V_1$, we notice by calculating the
characteristic and minimal polynomials that $\xpm$ has two
eigenvectors with eigenvalue 1. We have only exploited one of these;
acting on the other by $X_+$ and $X_-$ gives a module that is of the
form $\mtwo$ after quotienting with the $\mone$ from the last
paragraph. Together, these submodules have dimension eight, while V
has dimension nine, so we deduce the complete decomposition
\begin{equation}
\mone \c+ ((\mtwo \c+ \mone) \oplus \mtwo \oplus \mthree).
\end{equation}

\subsubsection{The subcase $\gamma=-c$; $d+\delta=0$; $\beta=-b$; $1 +
  bc + b^2 c^2 - d^2 - c\beta - b c^2 \beta + c^2 \beta^2 \neq 0$}\label{sec:bcdbcd-c02}

As in the last case, we begin with acting by $X_+$ on the lowest weight vectors,
which gives two submodules of the forms $\mthree$ and $\mone \gets
\mtwo$. These submodules contain all but one highest weight
vector, which has weight 1. Acting on it by $X_-$ gives a submodule of
the form $\mtwo$. Since there are no more highest or lowest weight
vectors, the complete decomposition is of the form
\begin{equation}
\mone \c+ (\mtwo \c+ (\mthree \oplus (\mone \gets \mtwo))).
\end{equation}

\subsubsection{The subcase $\gamma=-c$; $d+\delta=0$; $\beta \neq -b$; $1 +
  bc + b^2 c^2 - d^2 - c\beta - b c^2 \beta + c^2 \beta^2 = 0$}\label{sec:bcdbcd-c03}

This case proceeds completely analogously to above, only after
exploiting the lowest weight vectors, we have two submodules of the
forms $\mthree$ and $\mtwo \gets \mone$. The one remaining highest
weight vector is of weight 0. Acting on it by $X_-$ yields a vector in
$\mthree \oplus (\mtwo \gets \mone)$. With no remaining highest or
lowest weight vectors, the decomposition is of the form
\begin{equation}
\mtwo \c+ (\mone \c+ (\mthree \oplus (\mtwo \gets \mone))).
\end{equation}

\subsubsection{The subcase $\gamma=-c$; $d+\delta=0$; $\beta \neq -b$; $1 +
  bc + b^2 c^2 - d^2 - c\beta - b c^2 \beta + c^2 \beta^2 \neq 0$}\label{sec:bcdbcd-c04}

In this case there are no highest weight vectors. Acting on the lowest
weight vectors by $X_+$ gives two irreducible submodules, $T(K,0,0)$
and $\widetilde{T} (\frac{1}{K},K,0)$. This exhausts all of the lowest
weight vectors. By Lemma \ref{lemma:cubed} there can be no module of
the form $T(b,c,d)$ without highest or lowest weight
vectors. Therefore, the remainder of the module can be irreducible
only upon quotienting. Selecting an arbitrary vector from $V/(T(K,0,0)
\oplus \widetilde{T} (\frac{1}{K},K,0))$ and acting on it by $X_+$
shows that the quotient is of the form $\widetilde{T}
(\frac{1}{K},K,0)$. In all, we get
\begin{equation}
\widetilde{T} \Big(\frac{1}{K},K,0\Big) \c+ \Big(T(K,0,0) \oplus
\widetilde{T} \Big(\frac{1}{K},K,0\Big)\Big)
\end{equation}

\subsubsection{The subcase $\gamma=-c$; $d+\delta=1$; $d=-1$; $(1-d) b
  = d\beta$}\label{sec:bcdbcd-c11}

This case is done completely analogously to Section
\ref{sec:bcdbcd-c01} and gives the same result.

\subsubsection{The subcase $\gamma=-c$; $d+\delta=1$; $d=-1$; $(1-d) b
  \neq d\beta$}\label{sec:bcdbcd-c12}

This case proceeds as in Section \ref{sec:bcdbcd-c02}. However, here the
equations are a bit simpler, and we can achieve the somewhat sharper
decomposition
\begin{equation}
\mone \c+ (\mthree \oplus (\mtwo \to \mone \gets \mtwo)).
\end{equation}

\subsubsection{The subcase $\ref{sec:bcdbcd-c02}\gamma=-c$; $d+\delta=1$; $d \neq -1$; $(1-d) b
  = d\beta$}\label{sec:bcdbcd-c13}

This case proceeds as in Section \ref{sec:bcdbcd-c03}. As in the previous
  case, we can achieve a somewhat sharper decomposition,
\begin{equation}
\mtwo \c+ (\mthree \oplus (\mone \to \mtwo \gets
  \mone)).
\end{equation}

\subsubsection{The subcase $\gamma=-c$; $d+\delta=1$; $d \neq -1$; $(1-d) b
  \neq d\beta$}\label{sec:bcdbcd-c14}

This case is completely analogous to that of Section \ref{sec:bcdbcd-c04}, and
gives an identical result.

\subsubsection{The subcase $\gamma=-c$; $d+\delta=2$; $d = 1$; $(1+d) b
  = -d \beta$}

This case is handled just like Section \ref{sec:bcdbcd-c11} and
gives the same result.

\subsubsection{The subcase $\gamma=-c$; $d+\delta=2$; $d = 1$; $(1+d) b
  \neq -d \beta$}

Here, we proceed as in Section \ref{sec:bcdbcd-c12} and get the
same result.

\subsubsection{The subcase $\gamma=-c$; $d+\delta=2$; $d \neq 1$; $(1+d) b
  = -d \beta$}

This case is analogous to Section \ref{sec:bcdbcd-c13} and again
gives the same result.

\subsubsection{The subcase $\gamma=-c$; $d+\delta=2$; $d \neq 1$; $(1+d) b
  \neq -d \beta$}

Here, we compute the same result as in Section \ref{sec:bcdbcd-c14}
in exactly the same manner.

\subsubsection{The subcase $\gamma=-c$; $d+\delta \neq 0,1$ or $2$; $b a_1
  a_2 = -\beta \alpha_1 \alpha_2$}

The decomposition for this case is implied by that in Section
\ref{sec;bcdbcd-c} simply by substituting $K=0$. The computations are
all still valid.

\subsubsection{The subcase $\gamma=-c$; $d+\delta \neq 0,1$ or $2$; $b a_1
  a_2 \neq -\beta \alpha_1 \alpha_2$}\label{sec;bcdbcd-c}

Here, we have no highest weight vectors, but there are three lowest
weight vectors. Acting on them by $X_+$ immediately gives us the
complete decomposition,
\begin{equation}
T(K,0,d+\delta-1) \oplus T(K,0,d+\delta)
\oplus T(K,0,d+\delta+1).
\end{equation}

\subsubsection{The subcase $\gamma \neq -c$; $b=\frac{1}{c}$; $d=0$; $\beta =
  \frac{1}{\gamma}$; $\delta=0$}\label{sec:bcdbcdExceptional}

In this case, there are no lowest weight vectors, but we have five
highest weight vectors to exploit. Acting on them by $X_-$ gives the
complete decomposition
\begin{equation}
T\Big(0,\frac{b+\beta}{b\beta},-1\Big) \oplus
T\Big(0,\frac{b+\beta}{b\beta},0\Big) \oplus
T\Big(0,\frac{b+\beta}{b\beta},1\Big).
\end{equation}

\subsubsection{The subcase $\gamma \neq -c$; $b a_1 a_2 = - \beta
  \alpha_1 \alpha_2$; $d+\delta=0$}

In this case, there are three highest weight vectors. Acting on them
by $X_-$, we obtain two submodules, $T(0,c+\gamma,-1)$ and
$T(0,c+\gamma,0)$. This exhausts all highest weight vectors.

The two remaining possibilities are that we have some module of the
form $T(b',c',d')$ without highest or lowest weight vectors, or that
the remainder of the module forms an irreducible module only after
quotienting. Lemma \ref{lemma:cubed} rules out the first possibility,
so we must have the second. Furthermore, again by Lemma
\ref{lemma:cubed}, we know that the quotient module must have
the form $T(b',c',d')$ after quotienting, since it can have no lowest
weight vectors. Furthermore, it must have $c'=c+\gamma$.

Let us examine the action of $\xpm$ on $V_0$,
and argue analogously to Section \ref{sec:bc0bcdNothing}. The minimal
polynomial of $\xpm$ on this space is $\lambda^2 (\lambda+1)$. There
is one eigenvector of eigenvalue 0 and one of eigenvalue $-1$. The
quotient of $V_0$ by the span of these eigenvectors is a
1\defis dimensional space, and the minimal polynomial of $\xpm$ on this
space is $\lambda$. Therefore, choosing a basis vector $v'_3$ for the
quotient, we have $\xpm v'_3 = 0$ for the action on the
quotient. Setting $v'_2 = X_+ v'_3$ and $v'_1 = X_+^2 v'_3$ and taking
$\{ v'_1, v'_2, v'_3 \}$ as a basis for the quotient space, we
determine that $b' = 0$ and $d' = 0+2 = -1$.

\subsubsection{The subcase $\gamma \neq -c$; $b a_1 a_2 = - \beta
  \alpha_1 \alpha_2$; $d+\delta \neq 0,1,2$}

Here, we have three highest weight vectors. Acting on them by $X_-$
yields, after a lengthy but straightforward calculation, three
irreducible submodules which form a complete decomposition,
\begin{equation}
T(0,c+\gamma,d+\delta-1) \oplus T(0,c+\gamma,d+\delta) \oplus
T(0,c+\gamma,d+\delta+1).
\end{equation}

\subsubsection{The subcase $\gamma \neq -c$; $b a_1 a_2 \neq - \beta
  \alpha_1 \alpha_2$}\label{sec:bcdbcdNothing}

Here, there are no highest or lowest weight vectors. Therefore, we
want to proceed upon the lines of \ref{sec:bc0bcdNothing},
trying to apply Lemma \ref{lemma:decomp} to determine the
structure of the decomposition.

The characteristic polynomial of $\xpm$ acting on $V_{d+\delta+1}$ is
given by
\begin{equation}\label{eq:bcdcharpol}
\lambda^3 + \lambda^2 + (1-(d+\delta)^2) \lambda - (c+\gamma) (b a_1
a_2 + \beta \alpha_1 \alpha_2).
\end{equation}
We are now interested in using this to determine how many
linearly independent eigenvectors $\xpm$ has on $V_{d+\delta+1}$.

We note, as in \ref{sec:bc0bcdNothing}, that
\eqref{eq:bcdcharpol} must have at least two distinct roots, since
$(\lambda-\mu)^3 = \lambda^3 - \mu^3$ in characteristic 3.

It is, however, possible that \eqref{eq:bcdcharpol} has only two
distinct roots. Equating coefficients of \eqref{eq:bcdcharpol} and
\eqref{eq:bc0mult2} gives that \eqref{eq:bcdcharpol} is of the form
$(\lambda-\mu)^2 (\lambda-\rho)$ if and only if $\mu=-(d+\delta)
(1+d+\delta)$ and $\rho=-1-(d+\delta) (1+d+\delta)$, \emph{as well as}
the following condition on the parameters $b$, $c$, $d$, etc.~is
satisfied:
\begin{equation}\label{eq:bcdcond}
(d+\delta)^2 (1-(d+\delta)^2)^2 = -(c+\gamma) (b a_1 a_2 +\beta
  \alpha_1 \alpha_2).
\end{equation}

We will still have a direct sum decomposition even if
\eqref{eq:bcdcond} is satisfied, provided the minimal polynomial of
$\xpm$ on $V_{d+\delta+1}$ factors into distinct linear factors, i.e.,
\begin{equation}
(\xpm-\mu I) (\xpm-\rho I) = 0,
\end{equation}
This matrix equation can be viewed
as a system of nine equations. Taking sums and differences of these
equations, and using a good deal of brute force, we can reduce these
nine equations to the following three conditions:
\begin{equation}\label{eq:bcdcond2}
\begin{split}
 &bd (c+\gamma) = (d+\delta) (1-(d+\delta)^2), \\
 &\beta \delta = bd, \\
 &\gamma (d-d^3) = c (\delta-\delta^3).
\end{split}
\end{equation}

Therefore, we have the decomposition
\begin{equation}
T(b_3,c_3,d_3)
\c+ (T(b_1,c_1,d_1) \oplus T(b_2,c_2,d_2)).
\end{equation}
if and only if \eqref{eq:bcdcond} is satisfied but \eqref{eq:bcdcond2} is
not. Otherwise, we have the direct sum decomposition
\begin{equation}
T(b_1,c_1,d_1) \oplus
T(b_2,c_2,d_2) \oplus T(b_3,c_3,d_3)
\end{equation}

To determine the $b_i$, $c_i$, and $d_i$, let us repeat our
considerations from Section \ref{sec:bc0bcdNothing} in this case. Let
$\mu_1$, $\mu_2$, and $\mu_3$ be the (not necessarily distinct) roots
of the polynomial \eqref{eq:bcdcharpol}, and assume that at least
$\mu_1 \neq \mu_2$. As before, let $v_{3,i}$ be the distinct
eigenvectors of $\xpm$ in $V_{d+\delta+1}$, so that $i$ ranges from
one to the number of distinct eigenvectors. Then set $v_{2,i} = X_+
v_{3,i}$ and $v_{1,i} = X_+^2 v_{3,i}$. We take $\left\{v_{1,i},
v_{2,i}, v_{3,i}\right\}$ as our basis for $T(b_i,c_i,d_i)$ in the
matrix representation \eqref{eq:T}.

Since $X_-^3 v_{j,i} = (c+\gamma) v_{j,i}$ for all $i$ and $j$, it
follows that $c_i = c+\gamma$ for all $i$. We then have the two
equations $\xpm v_{3,i} = b_i c_i v_{3,i}$ and $\xpm v_{3,i} = \mu_i
v_{3,i}$. These imply that
\begin{equation}
b_i = \frac{\mu_i}{c+\gamma}.
\end{equation}
To determine $d_i$, remember that
each $v_{3,i}$ belongs to $V_{d+\delta+1}$, so $v_{2,i}$ is of weight
$(d+\delta+1)+2 = d+\delta$. Hence, $d_i = d+\delta$.

As in Section \ref{sec:bc0bcdNothing}, we must finally determine $b_3$
in the case that there are only two distinct eigenvectors, that is,
when the minimal polynomial does not factor into linear factors with
multiplicity one. In this case we have $\mu_1 = \mu_3 = -(d+\delta)
(1+d+\delta)$ and $\mu_2 = -1-(d+\delta) (1+d+\delta)$. Here also, The
minimal polynomial of $\xpm$ on $V_{d+\delta+1}$ is
\begin{equation}
(\lambda +
(d+\delta) (1+d+\delta))^2 (\lambda + (1+(d+\delta)
(1+d+\delta))).
\end{equation}
By the same arguments as in Section \ref{sec:bc0bcdNothing}, we deduce that
\begin{equation}
b_3 = \frac{-(d+\delta) (1+d+\delta)}{c+\gamma}.
\end{equation}

This completes our determination of the parameters for this
decomposition, and thus our computations for Theorem \ref{thm:main}.

\renewcommand*{\arraystretch}{1.5}

\begin{landscape}

\begin{longtable}[p]{|l|l|}
\hline
\multicolumn{1}{|c|}{\bf Symbol} & \multicolumn{1}{|c|}{\bf Meaning} \\
\hline
$a_1$ & $bc+d-1$ \\
\hline
$a_2$ & $bc-d-1$ \\
\hline
$\alpha_1$ & $\beta \gamma + \delta -1$ \\
\hline
$\alpha_2$ & $\beta \gamma - \delta -1$ \\
\hline
$J$ & $1 + \alpha_1 \alpha_2 b \beta$ \\
\hline
$K$ & $a_1 a_2 b + \alpha_1 \alpha_2 \beta$ \\
\hline
$D$ & $(d+\delta)^2 (1-(d+\delta)^2)^2$ \\
\hline
$\Delta$ & $(d+\delta) (1+d+\delta)$ \\
\hline
$\rho_1$, $\rho_2$, $\rho_3$ & The roots of the polynomial $\lambda^3 + (1-\delta^2) \lambda^2 + \lambda - \frac{\gamma}{b}
(1+b\beta (1-\beta\gamma+\delta) (1-\beta\gamma-\delta))$. \\
\hline
$\mu_1$, $\mu_2$, $\mu_3$ & The roots of the polynomial $\lambda^3 + \lambda^2 + (1-(d+\delta)^2) \lambda - (c+\gamma) (b a_1
a_2 + \beta \alpha_1 \alpha_2)$. \\
\hline
$M_1$ &
$\xymatrix@dr{
\mtwo \ar[d] \ar[r] & \mone \ar[d] \\
\mone \ar[r] & \mtwo
}$
\\
\hline
\caption{Symbols used in Tables
  \ref{table:TwoTimesbcd}--\ref{table:bcdTimesbcd}}
\end{longtable}

\pagebreak

\begin{longtable}[p]{|l|l|l|l|l|}
\hline
\multicolumn{4}{|c|}{\bf Relations} & \multicolumn{1}{|c|}{\bf
  Decomposition} \\
\hline
$c=0$ & $d=0$ & \multicolumn{2}{|l|}{$b=0$} & $M_1$ \\
\cline{3-5}
 & & \multicolumn{2}{|l|}{$b \neq 0$} & $\widetilde{T}(\frac{1}{b},b,0) \c+ \widetilde{T}(\frac{1}{b},b,0)$\\
\cline{2-5}
 & $d=1$ & \multicolumn{2}{|l|}{$b=0$} & $\mthree \oplus (\mtwo \rightarrow \mone)$ \\
\cline{3-5}
 & & \multicolumn{2}{|l|}{$b \neq 0$} & $\mthree \oplus (\mtwo \c+ \mone)$ \\
\cline{2-5}
 & $d=2$ & \multicolumn{2}{|l|}{$b=0$} & $\mthree \oplus (\mone \rightarrow \mtwo)$ \\
\cline{3-5}
 & & \multicolumn{2}{|l|}{$b \neq 0$} & $\mthree \oplus (\mone \c+ \mtwo)$ \\
\cline{2-5}
 & \multicolumn{3}{|l|}{$d \neq 0,1,2$} & $T(\frac{b(d-1)}{d},0,d-1) \oplus T(\frac{b(d+1)}{d},0,d+1)$ \\
\hline
$c \neq 0$ & $d=0$ & \multicolumn{2}{|l|}{$b=0$} & $T(0,c,1) \c+ T(0,c,1)$ \\
\cline{3-5}
 & & \multicolumn{2}{|l|}{$b=\frac{1}{c}$} & $T(0,c,1) \oplus T(0,c,1)$ \\
\cline{3-5}
 & & \multicolumn{2}{|l|}{$b \neq 0,\frac{1}{c}$} & $T(b+\sqrt{\frac{b}{c}},c,1)
\oplus T(b-\sqrt{\frac{b}{c}},c,1)$\\
\cline{2-5}
 & $d \neq 0,1,2$ & \multicolumn{2}{|l|}{$b=0$} & $T(0,c,d+1) \oplus T(0,c,d-1)$\\
\cline{3-5}
 & & \multicolumn{2}{|l|}{$1-bc+d=0$} & $T(0,c,d) \oplus T(0,c,d+1)$ \\
\cline{3-5}
 & & \multicolumn{2}{|l|}{$1-bc-d=0$} & $T(0,c,d-1) \oplus T(0,c,d)$ \\
\cline{3-5}
 & & $b \neq 0,$ & $bc+d^2 = 0$ & $T(b+\frac{d}{c},c,d+1) \c+T(b+\frac{d}{c},c,d+1)$ \\
\cline{4-5}
 & & $1-bc \pm d \neq 0$ & $bc+d^2 \neq 0$ & $T(b+\frac{d+\sqrt{bc+d^2}}{c},c,d+1) \oplus T(b+\frac{d-\sqrt{bc+d^2}}{c},c,d+1)$ \\
\hline
\caption{$\mtwo \otimes T(b,c,d)$}\label{table:TwoTimesbcd}
\end{longtable}

\pagebreak

\begin{longtable}[p]{|l|l|}
\hline
\multicolumn{1}{|c|}{\bf Relations} & \multicolumn{1}{|c|}{\bf Decomposition} \\
\hline
$b=-\beta$ & $\mthree \oplus (\mtwo \gets \mone) \oplus (\mone \gets \mtwo)$ \\
\hline
$b \neq -\beta$ & $T(\frac{b+\beta}{b\beta},0,0) \oplus
\widetilde{T}(\frac{b\beta}{b+\beta},\frac{b+\beta}{b\beta},0) \oplus \widetilde{T}(\frac{b\beta}{b+\beta},\frac{b+\beta}{b\beta},0)$ \\
\hline
\caption{$\widetilde{T}(b,\frac{1}{b},0) \otimes \widetilde{T}(\beta,\frac{1}{\beta},0)$}
\end{longtable}

\begin{longtable}[p]{|l|l|l|l|l|}
\hline
\multicolumn{4}{|c|}{\bf Relations} & \multicolumn{1}{|c|}{\bf Decomposition} \\
\hline
$\gamma = 0$ & $\delta = 0$ & \multicolumn{2}{|l|}{$\beta = -\frac{1}{b}$} &
$\mthree \oplus (\mtwo \c+ (\mone \to \mtwo \gets \mone))$ \\
\cline{3-5}
 & & \multicolumn{2}{|l|}{$\beta \neq -\frac{1}{b}$} & $\widetilde{T}(\frac{b}{1+b\beta},\frac{1+b\beta}{b},0) \c+ (\widetilde{T}(\frac{b}{1+b\beta},\frac{1+b\beta}{b},0) \oplus T(\frac{1+b\beta}{b},0,0))$ \\
\cline{2-5}
 & \multicolumn{3}{|l|}{$\delta = 1$} & $\widetilde{T}(b,\frac{1}{b},0) \c+
(\widetilde{T}(b,\frac{1}{b},0) \oplus T(\frac{1}{b},0,0))$ \\
\cline{2-5}
 & \multicolumn{3}{|l|}{$\delta = 2$} & $\widetilde{T}(b,\frac{1}{b},0) \c+
(\widetilde{T}(b,\frac{1}{b},0) \oplus
T(\frac{1}{b},0,0))$ \\
\cline{2-5}
 & $\delta \neq 0,1,2$ & \multicolumn{2}{|l|}{$b \beta (1 - \delta^2)
  = -1$} & $T(0,0,\delta-1) \oplus T(0,0,\delta) \oplus T(0,0,\delta+1)$ \\
\cline{3-5}
 & & \multicolumn{2}{|l|}{$b \beta (1 - \delta^2) \neq -1$} & $T(J,0,\delta-1) \oplus T(J,0,\delta) \oplus T(J,0,\delta+1) $ \\
\hline
$\gamma \neq 0$ & $\delta = 0$ & \multicolumn{2}{|l|}{$b \beta (1 -
  \beta \gamma)^2 = -1$} & $T(0,\gamma,-1) \c+ (T(0,\gamma,0) \oplus T(0,\gamma,-1))$ \\
\cline{3-5}
 & & \multicolumn{2}{|l|}{$b \beta (1 - \beta \gamma)^2 \neq -1$} &
$T(\rho_1,c,0) \oplus T(\rho_2,c,0) \oplus T(\rho_3,c,0) $ \\
\cline{2-5}
 & $\delta \neq 0,1,2$ & \multicolumn{2}{|l|}{$J=0$} & $T(0,\gamma,\delta-1) \oplus T(0,\gamma,\delta) \oplus T(0,\gamma,\delta+1)$ \\
\cline{3-5}
 & & $J \neq 0$ & $-\frac{\gamma}{b} J \neq (\delta (\delta+1) (\delta-1))^2$
& $T(\rho_1,c,\delta) \oplus T(\rho_2,c,\delta) \oplus T(\rho_3,c,\delta)$ \\
\cline{4-5}
 & & & $-\frac{\gamma}{b} J = (\delta (\delta+1) (\delta-1))^2$ & $T(\frac{1-\delta^2}{c},c,\delta) \c+ (T(\frac{1-\delta^2}{c},c, \delta) \oplus T(-\frac{\delta^2}{c},c,\delta))$ \\
\hline
\caption{$\widetilde{T}(b,\frac{1}{b},0) \otimes T(\beta,\gamma,\delta)$}
\end{longtable}

\pagebreak

\begin{longtable}[p]{|l|l|l|l|l|}
\hline
\multicolumn{3}{|c|}{\bf Relations} & \multicolumn{1}{|c|}{\bf
  Decomposition} \\
\hline
$d + \delta = 0$ & \multicolumn{2}{|l|}{$b=\frac{1}{c},\ d=0,\ \beta=\frac{1}{\gamma},\ \delta=0$} & $\mthree \oplus (\mone \rightarrow \mtwo) \oplus (\mtwo
\rightarrow \mone)$ \\
\cline{2-4}
 & $\beta = -b$ & $1 + bc + b^2 c^2 - d^2 - c \beta - b c^2 \beta + c^2 \beta^2 = 0$ & $\mone \c+ ((\mtwo \c+
\mone) \oplus \mtwo
\oplus \mthree)$ \\
\cline{3-4}
 & & $1 + bc + b^2 c^2 - d^2 - c \beta - b c^2 \beta + c^2 \beta^2 \neq 0$ & $\mone \c+ (\mtwo \c+ (\mthree \oplus (\mone \leftarrow \mtwo)))$ \\
\cline{2-4}
 & $\beta \neq -b$ & $1 + bc + b^2 c^2 - d^2 - c \beta - b c^2 \beta +c^2 \beta^2 = 0$ & $\mtwo \c+ (\mone \c+
(\mthree \oplus (\mtwo \leftarrow \mone)))$ \\
\cline{3-4}
 & & $1 + bc + b^2 c^2 - d^2 - c \beta - b c^2 \beta + c^2 \beta^2 \neq 0$ & $\widetilde{T}(\frac{1}{K},K,0) \c+
(T(K,0,0) \oplus \widetilde{T}(\frac{1}{K},K,0))$ \\
\hline
 $d + \delta = 1$ & $d=-1$ & $\beta = b$ & $\mone \c+ ((\mtwo \c+
\mone) \oplus \mtwo \oplus \mthree)$ \\
\cline{3-4}
 & & $\beta \neq b$ & $\mone \c+ (\mthree \oplus (\mtwo \rightarrow \mone \leftarrow \mtwo))$ \\
\cline{2-4}
 & $d \neq -1$ & $(1-d)b = d \beta$ & $\mtwo \c+ (\mthree \oplus (\mone \rightarrow \mtwo \leftarrow \mone))$ \\
\cline{3-4}
 & & $(1-d)b \neq d \beta$ & $\widetilde{T}(\frac{1}{K},K,0) \c+ (T(K,0,0) \oplus \widetilde{T}(\frac{1}{K},K,0))$ \\
\hline
 $d + \delta = 2$ & $d=1$ & $\beta = b$ & $\mthree \oplus (\mtwo \c+
\mone) \oplus (\mone \c+ \mtwo)$ \\
\cline{3-4}
 & & $\beta \neq b$ & $\mone \c+ (\mthree \oplus (\mtwo \rightarrow \mone \leftarrow \mtwo))$ \\
\cline{2-4}
 & $d \neq 1$ & $(1+d)b = -d \beta$ & $\mtwo \c+ (\mone \c+ ((\mone \leftarrow \mtwo) \oplus \mthree))$ \\
\cline{3-4}
 & & $(1+d)b \neq -d \beta$ & $\widetilde{T}(\frac{1}{K},K,0) \c+ (T(K,0,0) \oplus \widetilde{T}(\frac{1}{K},K,0))$ \\
\hline
 $d + \delta \neq 0,1,2$ & \multicolumn{2}{|l|}{$b (1-bc+d) (1-bc-d) =
  -\beta (1 + c \beta + \delta) (1 + c \beta - \delta)$} & $T(0,0,d+\delta-1) \oplus T(0,0,d+\delta) \oplus T(0,0,d+\delta+1)$ \\
\cline{2-4}
 & \multicolumn{2}{|l|}{$b (1-bc+d) (1-bc-d)
  \neq -\beta (1 + c \beta + \delta) (1 + c \beta - \delta)$} & $T(K,0,d+\delta-1) \oplus T(K,0,d+\delta) \oplus T(K,0,d+\delta+1)$ \\
\hline
\multicolumn{4}{|l|}{\emph{Note:} Except where explicitly state this
to be the case, we assume that we do \emph{not} have all of the
conditions} \\
\multicolumn{4}{|l|}{$b=\frac{1}{c}$, $d=0$, $\beta = \frac{1}{\gamma}$, and
$\delta=0$.} \\
\hline
\caption{$T(b,c,d) \otimes T(\beta,\gamma,\delta)$ ($\gamma=-c$)}\label{table:bcdTimesbcdLowest}
\end{longtable}

\pagebreak

\begin{longtable}[p]{|l|l|l|l|}
\hline
\multicolumn{3}{|c|}{\bf Relations} & \multicolumn{1}{|c|}{\bf
  Decomposition} \\
\hline
\multicolumn{3}{|l|}{$b=\frac{1}{c},\ d=0,\ \beta =
  \frac{1}{\gamma},\ \delta=0$} & $T(0,\frac{b+\beta}{b\beta},-1) \oplus T(0,\frac{b+\beta}{b\beta},0) \oplus T(0,\frac{b+\beta}{b\beta},1)$ \\
\hline
$K=0$ & \multicolumn{2}{|l|}{$d + \delta = 0$} & $T(0,c+\gamma,-1) \c+ (T(0,c+\gamma,0) \oplus T(0,c+\gamma,-1))$ \\
\cline{2-4}
 & \multicolumn{2}{|l|}{$d + \delta \neq 0,1,2$} & $T(0,c+\gamma,d+\delta-1) \oplus
  T(0,c+\gamma,d+\delta) \oplus T(0,c+\gamma,d+\delta+1)$ \\
\hline
$K \neq 0$ & $D = -K (c+\gamma)$ & $bd(c+\gamma) = \sqrt{D},$ &
$T(\frac{\mu_1}{c+\gamma},c+\gamma,d+\delta) \oplus
T(\frac{\mu_2}{c+\gamma},c+\gamma,d+\delta) \oplus
T(\frac{\mu_3}{c+\gamma},c+\gamma,d+\delta)$ \\
 & & $\beta\delta = bd,$ & \\
 & & $\gamma (d-d^3) = c (\delta-\delta^3)$ & \\
\cline{3-4}
 & & \emph{otherwise} & $T(-\frac{\Delta}{c+\gamma},c+\gamma,d+\delta) \c+ (T(-\frac{\Delta}{c+\gamma},c+\gamma, d+\delta) \oplus T(\frac{-1-\Delta}{c+\gamma}, c+\gamma,d+\delta))$ \\
\cline{2-4}
& \multicolumn{2}{|l|}{$D \neq -K (c+\gamma)$} &
$T(\frac{\mu_1}{c+\gamma},c+\gamma,d+\delta) \oplus
T(\frac{\mu_2}{c+\gamma},c+\gamma,d+\delta) \oplus T(\frac{\mu_3}{c+\gamma},c+\gamma,d+\delta)$ \\
\hline
\multicolumn{4}{|l|}{\emph{Note:} Except where explicitly state this
to be the case, we assume that we do \emph{not} have all of the
conditions} \\
\multicolumn{4}{|l|}{$b=\frac{1}{c}$, $d=0$, $\beta = \frac{1}{\gamma}$, and
$\delta=0$.} \\
\hline
\caption{$T(b,c,d) \otimes T(\beta,\gamma,\delta)$ ($\gamma \neq
  c$)}\label{table:bcdTimesbcd}
\end{longtable}

\end{landscape}

\bibliography{main}
\bibliographystyle{hamsplain}

\end{document}